\newcounter{i}
\theoremstyle{plain}
\newtheorem{thm}{Theorem}[section]
\newtheorem{lem}[thm]{Lemma}
\newtheorem{claim}{Claim}[thm]
\newtheorem{cor}[thm]{Corollary}
\newtheorem{conj}[thm]{Conjecture}
\noindent \emph{Proof.} {}{#1}{}}{\hfill
\theoremstyle{plain} 
\newcommand{\thistheoremname}{}
\newtheorem{genericthm}{\thistheoremname}
\theoremstyle{definition}
\title{The limit in the $(k+2, k)$-Problem of Brown, Erd\H{o}s and S\'os exists for all $k\geq 2$} 
\author{
Michelle Delcourt
\thanks{Department of Mathematics, Toronto Metropolitan University (formerly named Ryerson University),
Toronto, Ontario M5B 2K3, Canada {\tt mdelcourt@torontomu.ca}. Research supported by NSERC under Discovery Grant No. 2019-04269.}
\and
Luke Postle
\thanks{Combinatorics and Optimization Department,
University of Waterloo, Waterloo, Ontario N2L 3G1, Canada {\tt lpostle@uwaterloo.ca}. Partially supported by NSERC
under Discovery Grant No. 2019-04304.}}
\date{\today}
\begin{document}

\maketitle

\begin{abstract} 
Let $f^{(r)}(n;s,k)$ be the maximum number of edges of an $r$-uniform hypergraph on~$n$ vertices not containing a subgraph with $k$~edges and at most $s$~vertices. In 1973, Brown, Erd\H{o}s and S\'os conjectured that the limit $$\lim_{n\to \infty} n^{-2} f^{(3)}(n;k+2,k)$$ exists for all positive integers $k\ge 2$. They proved this for $k=2$. In 2019, Glock proved this for $k=3$ and determined the limit. Quite recently, Glock, Joos, Kim, K\"{u}hn, Lichev and Pikhurko proved this for $k=4$ and determined the limit; we combine their work with a new reduction to fully resolve the conjecture by proving that indeed the limit exists for all positive integers $k\ge 2$.
\end{abstract}

\section{Introduction}

In an $r$-uniform hypergraph, an \emph{$(s,k)$-configuration} is a collection of $k$ edges which span at most $s$ vertices. An $r$-uniform hypergraph is said to be \emph{$(s,k)$-free} if it contains no $(s,k)$-configuration. In 1973, Brown, Erd\H{o}s and S\'os~\cite{BES73} initiated the study of $f^{(r)}(n;s,k)$, the maximum number of edges in an $(s,k)$-free $r$-uniform hypergraph on $n$ vertices, and they made the following conjecture:

\begin{conj}[Brown, Erd\H{o}s and S\'os~\cite{BES73}] \label{conj:BES}
The limit $\lim_{n\to \infty}n^{-2}f^{(3)}(n;k+2,k)$ exists for all $k\ge 2$.
\end{conj}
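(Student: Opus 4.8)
The plan is to prove the existence of $\lim_{n\to\infty}n^{-2}f^{(3)}(n;k+2,k)$ by induction on $k$, with $k\in\{2,3,4\}$ as the base cases (Brown--Erd\H{o}s--S\'{o}s, Glock, and Glock--Joos--Kim--K\"{u}hn--Lichev--Pikhurko respectively), and a new reduction that for $k\ge 5$ deduces the existence of the limit from its existence for all smaller parameters. First I would record the a priori bound: if a $3$-uniform hypergraph $G$ on $n$ vertices is $(k+2,k)$-free then no pair of vertices lies in $k$ edges --- such $k$ edges span exactly $k+2$ vertices and form a $(k+2,k)$-configuration --- so $f^{(3)}(n;k+2,k)\le\tfrac{k-1}{3}\binom n2<\tfrac{k-1}{6}n^2$. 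Hence $a_n:=n^{-2}f^{(3)}(n;k+2,k)$ is bounded, and it suffices to prove $\limsup_n a_n=\liminf_n a_n$. It is worth emphasising at the outset why this is hard: the usual averaging argument (restrict an extremal configuration to the induced subhypergraph on a random subset of vertices) shows only that $\binom n3^{-1}f^{(3)}(n;k+2,k)$ is non-increasing, which is vacuous once one knows $f^{(3)}(n;k+2,k)=\Theta(n^2)$. The problem is degenerate in the Tur\'an sense, and that is precisely why a genuine structural reduction is needed rather than a soft monotonicity/subadditivity argument.

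For the reduction, fix $k\ge 5$ and an (almost-)extremal $(k+2,k)$-free $G$ on $n$ vertices. The first step is a deletion step: using the near-extremality of $G$, bound by $o(n^2)$ the number of ``bad'' short configurations (roughly, instances where two edges lie too close to an already-dense spot), and delete one edge from each, obtaining a ``locally clean'' $G'$ with $e(G')=e(G)-o(n^2)$. The second step exploits the structure of $(k+2,k)$-configurations: their components $(v_i,e_i)$ satisfy $\sum_i(v_i-e_i)\le 2$, so a configuration consists of a ``dense'' part --- components with $v_i\le e_i+2$ --- carrying only a bounded amount of tree-like slack; on a locally clean host this lets one reduce the maximisation of $e(G')$ to a finite collection of auxiliary extremal quantities. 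The tree-attached and ``dense but acyclic-core'' pieces of these quantities are governed, via the bounded-slack structure, by the $(k'+2,k')$-problems with $k'\le 4$ (plus classical linear/partial-Steiner estimates), which are available by the inductive hypothesis; the genuinely global pieces, namely the tight-cycle-like configurations, must be handled separately, either by showing their presence forces the loss of more than $o(n^2)$ edges or by a direct count. A supersaturation (hypergraph removal) step turns ``no forbidden configuration'' into the desired density upper bound, and the matching lower bound is obtained by starting from near-extremal hypergraphs for the relevant smaller parameters and ``completing'' them into $(k+2,k)$-free hypergraphs at a cost of only $o(n^2)$ edges. Together these give $\limsup_n a_n=\liminf_n a_n$. (An alternative route would be to try to prove a near-subadditive inequality $f^{(3)}(n+m;k+2,k)\le f^{(3)}(n;k+2,k)+f^{(3)}(m;k+2,k)+o((n+m)^2)$ and invoke a Fekete-type lemma; the obstacle there is that this requires near-extremal hypergraphs to admit a vertex partition crossed by only $o(n^2)$ edges, which again is a stability statement, not a soft one.)

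The main obstacle, in either route, is making the decomposition \emph{lossless}, i.e.\ keeping the exceptional/deleted part down to $o(n^2)$ edges. A naive count is useless here: for instance there can be $\Theta(n)$ pairs of codegree close to $k-1$, and deleting all edges through them costs $\Theta(n^2)$. Controlling the exceptional part therefore requires a genuine stability statement --- that any $(k+2,k)$-free hypergraph of nearly extremal size is structurally close to one of the ``clean'' near-optimal hypergraphs on which the decomposition is exact, and that bounded perturbations of such hypergraphs gain only $o(n^2)$ edges --- together with a delicate treatment of the cyclic/global configurations, which are exactly the configurations that distinguish the $(k+2,k)$-condition from any finite conjunction of lower-parameter conditions. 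Establishing this stability, and in tandem checking that the thinning destroys only configurations absorbed into the $o(n^2)$ error while the lower-bound completion creates no new forbidden configuration, is the technical heart; the a priori bound, the induction scheme, and the appeal to the base cases $k\le 4$ are by comparison routine.
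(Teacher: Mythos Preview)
Your proposal is a plan rather than a proof: you explicitly identify the ``stability statement'' and the handling of cyclic/global configurations as the technical heart, but you do not supply either. The specific reduction you sketch --- delete $o(n^2)$ edges from a near-extremal $(k+2,k)$-free $G$ to obtain a ``locally clean'' $G'$, then decompose forbidden configurations into pieces governed by the $(k'+2,k')$-problems with $k'\le 4$ --- has two unfilled gaps. First, you give no argument that the deletion costs only $o(n^2)$ edges; as you yourself note, naive counts fail, and the required stability is precisely what is missing. Second, there is no reason the decomposition should bottom out at $k'\le 4$: for instance, any five lines of the Fano plane form a $(7,5)$-configuration that is linear and contains no $(k'+2,k')$-subconfiguration for $k'\in\{2,3,4\}$, so the inductive appeal to those base cases is unsupported as stated.

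The paper's route is quite different, and in fact it explicitly remarks that the ``delete $o(n^2)$ edges to make $F$ $(\ell+1,\ell)$-free'' strategy (which had been suggested by Glock, Joos, Kim, K\"uhn, Lichev and Pikhurko) ``seems difficult in general''. Instead of edge-deleting to a clean subgraph of the same order, the paper vertex-deletes to a \emph{denser} induced subgraph on a constant fraction of the vertices. The key structural observation is that removing the vertex set of a $k$-maximal $(\ell+1,\ell)$-configuration $S$ destroys at most $(1-\tfrac{1}{k})(v(F)-v(S))+(k-1)$ edges; iterating, one obtains an induced $F'\subseteq F$ that is $(\ell+1,\ell)$-free for all $\ell\in[2,k-1]$, has $v(F')\ge v(F)/\sqrt{4k}$, and satisfies $e(F')/v(F')^2\ge e(F)/v(F)^2$ whenever $e(F)\ge\tfrac{1}{6}(1-\tfrac{1}{2k})v(F)^2$. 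No stability and no $o(n^2)$ accounting is needed. Writing $g^{(3)}(n;k+2,k)$ for the extremal function restricted to hypergraphs that are already $(\ell+1,\ell)$-free for all $\ell\in[2,k-1]$, this gives $\limsup n^{-2}f=\limsup n^{-2}g$; the existence of $\lim n^{-2}g$ follows directly from the conflict-free hypergraph matching machinery, and the conjecture follows for every $k\ge 2$ at once --- with no induction on $k$ and no use of the $k\in\{3,4\}$ cases.
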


The main result of this paper is a proof of Conjecture~\ref{conj:BES}.  Indeed, we prove a stronger statement as follows. But first a definition. 

Namely, we introduce the function $g^{(r)}(n;(r-t)k+t,k)$ denoting the maximum number of edges in a $((r-t)k+t,k)$-free $r$-uniform hypergraph on $n$ vertices that is also $((r-t)\ell+t-1,\ell)$-free for all $\ell \in [2,k-1]$.  We note that $f$ is at least $g$. 

Here then is our main result.

\begin{thm}\label{thm:limequality}
For all positive integers $k\ge 2$, $\lim_{n\to \infty} n^{-2}f^{(3)}(n;k+2,k)$ exists and 
$$\lim_{n\to \infty} n^{-2}f^{(3)}(n;k+2,k) = \lim_{n\to \infty} n^{-2}g^{(3)}(n;k+2,k).$$
\end{thm}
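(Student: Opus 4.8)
The plan is to deduce \cref{thm:limequality} from two ingredients. Write $f(n):=f^{(3)}(n;k+2,k)$ and $g(n):=g^{(3)}(n;k+2,k)$, and recall $g(n)\le f(n)$. The first ingredient is the inequality $f(n)\le g(n)+o(n^2)$, which together with $g(n)\le f(n)$ forces $n^{-2}f(n)$ and $n^{-2}g(n)$ to have the same limit, \emph{provided one of them converges}. The second ingredient supplies that convergence: $\lim_{n\to\infty}n^{-2}g(n)$ exists. For this I would appeal to the conflict-free hypergraph matching framework of Glock, Joos, Kim, Kühn, Lichev and Pikhurko: although their headline result is the case $k=4$, the relevant machinery and its accompanying averaging upper bound apply, for every $k\ge2$, to the ``conflict system'' whose ground set is $\binom{[n]}{3}$ and whose conflicts record the forbidden $(k+2,k)$- and $(\ell+1,\ell)$-configurations $(\ell\in[2,k-1])$; this pins $g(n)$ down to a fractional parameter which, being monotone under the usual vertex-deletion averaging, converges after division by $n^2$. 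The genuinely new work is the reduction inequality, so I concentrate on that.

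The reduction is a single deletion step. Let $H$ be a $(k+2,k)$-free $3$-graph on $[n]$ with $e(H)=f(n)$, and let $B\subseteq E(H)$ be the set of edges that lie in the edge set of \emph{some} $(\ell+1,\ell)$-configuration with $\ell\in[2,k-1]$ (a $3$-graph has no $(3,2)$-configuration, so in fact only $\ell\in[3,k-1]$ contributes). Put $H':=H-B$. Then $H'$ is $(k+2,k)$-free, and it is $(\ell+1,\ell)$-free for every $\ell\in[2,k-1]$: an $(\ell+1,\ell)$-configuration inside $H'$ would consist of $\ell$ edges each contained in that very configuration, hence each lying in $B$ --- impossible since $E(H')\cap B=\varnothing$. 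Thus $H'$ is of the kind counted by $g$, so $e(H')\le g(n)$, and hence $f(n)=e(H')+|B|\le g(n)+|B|$. Everything therefore rests on proving $|B|=o(n^2)$.

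To bound $|B|$ I would first record that in a $(k+2,k)$-free $3$-graph every pair of vertices lies in at most $k-1$ edges ($k$ such edges would span only $k+2$ vertices), so every vertex-link is a graph of maximum degree at most $k-1$. The core claim is then a supersaturation statement: for each of the finitely many ``small'' forbidden configurations $F$ --- the $(\ell+1,\ell)$-configurations with $\ell\in[3,k-1]$ --- a $(k+2,k)$-free $3$-graph on $[n]$ contains at most $o(n^2)$ edges lying in a copy of $F$; summing over the at most $k-3$ values of $\ell$ then yields $|B|=o(n^2)$. The mechanism is that too many copies of $F$ force too much local density: pigeonholing on a heavy vertex (or pair) and passing to its link --- a bounded-degree graph --- the sunflower lemma produces many copies of a ``reduced'' configuration with near-disjoint petals, and these paste together in $H$, through their common core, into $k$ edges spanning at most $k+2$ vertices, contradicting $(k+2,k)$-freeness. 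In the extremal sub-case $\ell=k-1$ the argument degenerates to the cleaner fact that a $(k,k-1)$-configuration $C$ can meet no edge of $H$ beyond its own --- one further edge incident to $V(C)$ would already give $k$ edges on at most $|V(C)|+2\le k+2$ vertices --- so such configurations are vertex-disjoint from the rest of $H$ and together contribute only $O(n)$ edges.

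The main obstacle is precisely this supersaturation step made uniform over all small $F$: one must identify the correct ``heavy substructure'' to pass to (a single vertex works for $(4,3)$-configurations but presumably not in general), control how the sunflower petals reassemble inside $H$ without spurious overlaps, and check that the number of petals needed to reach a $(k+2,k)$-configuration stays below the sunflower threshold $\Theta(n^{1/3})$ --- a balance that should hold because that threshold tends to infinity while the required number of petals is a constant depending only on $k$ and $\ell$. A secondary point to handle with care is confirming that the conflict-free matching results invoked for the existence of $\lim_{n\to\infty}n^{-2}g(n)$, set up in the literature for $k=4$, really do go through for arbitrary $k$ once the $(\ell+1,\ell)$-freeness constraints are built into the conflict system.
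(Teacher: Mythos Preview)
Your proposal has a genuine gap at precisely the point you yourself flag as ``the main obstacle'': the supersaturation claim that $|B|=o(n^2)$ is never proved, only sketched. The bounded-codegree observation gives at best $|B|=O_k(n^2)$, and the sunflower mechanism you describe does not obviously close the gap --- a sunflower with empty core yields vertex-disjoint copies of $F$, which combine into many edges on many vertices rather than $k$ edges on $k+2$; forcing a nonempty core requires additional structure you have not supplied. Your treatment of the boundary case $\ell=k-1$ is correct, but the interior cases $\ell\in[3,k-2]$ remain open in your write-up.

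This edge-deletion strategy is in fact exactly the approach the paper reports was suggested in~\cite{GJKKLP22} and then sets aside as one that ``seems difficult in general.'' The paper takes a different route: rather than deleting $o(n^2)$ \emph{edges}, it deletes \emph{vertices}. The key structural lemma (\cref{lem:maximal}) shows that if $S$ is a $k$-maximal $(\ell+1,\ell)$-configuration, then removing $V(S)$ costs at most $(1-\tfrac1k)(v(F)-v(S))+(k-1)$ edges, because no edge meets $V(S)$ in exactly two vertices and the edges meeting $V(S)$ in exactly one vertex trace a forest with components of size at most $k-\ell$ on the outside. Iterating this (\cref{lem:diffbound}) and combining with the upper bound $e(F')\le\tfrac{k-1}{4k-2}v(F')^2$ for $g$-type graphs (\cref{lem:upperbound}) produces an induced subgraph $F'$ on at least $v(F)/\sqrt{4k}$ vertices that is $(\ell+1,\ell)$-free for all $\ell\in[2,k-1]$ and satisfies $e(F')/v(F')^2\ge e(F)/v(F)^2$ (\cref{lem:largefreesubgraph}). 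This is weaker than your target inequality $f(n)\le g(n)+o(n^2)$ --- the witness $F'$ lives on fewer vertices than $F$ --- but it suffices to equate $\limsup n^{-2}f$ with $\limsup n^{-2}g$, after which your second ingredient (the existence of $\lim n^{-2}g$, which is indeed available for all $k$ via \cref{cor:GJKKLP2}) finishes the proof.
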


As to the history of Conjecture~\ref{conj:BES}, in 1973, Brown, Erd\H{o}s and S\'os~\cite{BES73} noted that Conjecture~\ref{conj:BES} holds for $k=2$ with a limit of $1/6$. The upper bound follows since a $3$-uniform hypergraph has no $(4,2)$-configuration if and only if it is linear (equivalently is a partial Steiner triple system). The lower bound follows from the existence of Steiner triple systems for all $n \equiv 1, 3 ~({\rm mod }~6)$, famously proved by Kirkman~\cite{K47} in 1847. 

In 2019, Glock~\cite{G19} proved that Conjecture~\ref{conj:BES} holds for $k=3$ and determined the limit (namely $1/5$). Quite recently, Glock, Joos, Kim, K\"{u}hn, Lichev and Pikhurko~\cite{GJKKLP22} proved Conjecture~\ref{conj:BES} holds for $k=4$ and determined the limit (namely $7/36$). In light of Theorem~\ref{thm:limequality}, determining the values of the limit in general is of interest. According to~\cite{GJKKLP22}, the previously best known general upper bound to date for $\limsup_{n\to \infty} n^{-2}f^{(3)}(n;k+2,k)$ is $\frac{k-1}{3k}$ by Brown, Erd\H{o}s and S\'os~\cite{BES73} in 1973.

Our second main result is that we improve the general upper bound as follows.

\begin{thm}\label{thm:limupperbound}
Let $k\ge 2$ be an integer. Then
$$\lim_{n\to \infty} n^{-2}f^{(3)}(n;k+2,k) \le \frac{k-1}{4k-2}.$$
\end{thm}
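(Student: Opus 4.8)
The plan is to combine the reduction of Theorem~\ref{thm:limequality} with a codegree‑based counting argument. By Theorem~\ref{thm:limequality} it suffices to bound $g^{(3)}(n;k+2,k)$, so fix an $n$‑vertex $3$‑graph $H$ with $m$ edges that is $(k+2,k)$‑free and also $((\ell+1),\ell)$‑free for every $\ell\in[2,k-1]$; the goal is $m\le\left(\tfrac{k-1}{4k-2}+o(1)\right)n^2$. The first observation is that every pair of vertices has codegree at most $k-1$, since the $t$ edges through a pair of codegree $t$ form a sunflower on $t+2$ vertices, which for $t\ge k$ contains a $(k+2,k)$‑configuration. Writing $B$ for the set of pairs of codegree at least $2$ and splitting the identity $3m=\sum_{p}d_H(p)$ according to whether $d_H(p)\le1$ or $p\in B$, and using $d_H(p)\le k-1$ on $B$ together with $\sum_{p}\min(d_H(p),1)\le\binom n2$, one obtains $3m\le\binom n2+(k-2)|B|$. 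A short rearrangement shows this already yields the theorem, provided that $|B|\le\left(\tfrac1{2k-1}+o(1)\right)\binom n2$, so everything reduces to bounding the number of "heavy" pairs.

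To bound $|B|$, I would analyse the local structure forced by the freeness conditions around a heavy pair. If $p=\{u,v\}$ has codegree $t$, with satellite set $W_p$ of size $t$, then attaching to the book on $\{u,v\}\cup W_p$ any further edge that meets this vertex set in two vertices would create, using $|W_p|=t$, a configuration with $k$ edges on at most $k+2$ vertices; hence the "rim" pairs $\{u,w\}$, $\{v,w\}$ and $\{w,w'\}$ ($w,w'\in W_p$) have codegree at most $k-t$, and, more to the point, heavy pairs cannot cluster: each heavy pair is enclosed in a region of lightly covered pairs whose size scales with $k$. One then charges the "excess codegree" $d_H(p)-1$ of each heavy pair across the pairs spanned by its book (or across the non‑edge pairs among its satellites) and sums. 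The freeness inherited from $g$ — in particular the $((\ell+1),\ell)$‑free conditions for $\ell<k$, which go strictly beyond $(k+2,k)$‑freeness and are exactly what should separate this bound from the classical $\tfrac{k-1}{3k}$ of Brown--Erd\H{o}s--S\'os — is what controls the multiplicity with which a given pair gets charged.

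The main obstacle is precisely this last step: bounding the overlap between the books of distinct heavy pairs and showing that the resulting average charge per pair is at most $\tfrac{k-2}{2k-1}$, which is what the exact constant $\tfrac1{2k-1}$ demands. I expect this to be handled by an optimization — effectively a linear program over the admissible local codegree profiles of a $g$‑type hypergraph — whose dual solution supplies the weights for the charging, with the $((\ell+1),\ell)$‑free conditions entering as the constraints that rule out the dense local profiles that would otherwise be consistent with $(k+2,k)$‑freeness alone. Once $|B|\le\left(\tfrac1{2k-1}+o(1)\right)\binom n2$ is established, the chain $3m\le\binom n2+(k-2)|B|\le\tfrac{3(k-1)}{2k-1}\binom n2(1+o(1))$ gives $m\le\left(\tfrac{k-1}{4k-2}+o(1)\right)n^2$ for $g^{(3)}(n;k+2,k)$, and hence — via Theorem~\ref{thm:limequality}, which also guarantees that the relevant limit exists — for $f^{(3)}(n;k+2,k)$ as well.
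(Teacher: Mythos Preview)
Your opening moves are fine and agree with the paper: reduce via Theorem~\ref{thm:limequality} to a $g$-type hypergraph $F$, observe that every pair has codegree at most $k-1$, and write $3m=\sum_p d_F(p)$. The problem is the target you set yourself after that. The inequality $3m\le\binom{n}{2}+(k-2)|B|$ uses the worst-case codegree $k-1$ on \emph{every} heavy pair, and this is only tight for the centre of a book. Consequently the bound $|B|\le\bigl(\tfrac{1}{2k-1}+o(1)\bigr)\binom{n}{2}$ that you need is simply false for $k\ge 4$. Take a ``chain'' component: the $k-1$ triples $\{v_i,v_{i+1},v_{i+2}\}$ for $1\le i\le k-1$. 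This is $(k+2,k)$-free and $(\ell+1,\ell)$-free for all $\ell\in[2,k-1]$, its $2$-shadow has $2k-1$ pairs, and it contains $k-2$ heavy pairs (each of codegree exactly $2$). A near-packing of $K_n$ by such chain shadows --- which exists by the same conflict-free matching machinery underlying Theorem~\ref{thm:GJKKLP3} --- therefore gives $|B|\approx\tfrac{k-2}{2k-1}\binom{n}{2}$, while still being an extremal $g$-type hypergraph with $m\approx\tfrac{k-1}{2k-1}\binom{n}{2}$. Plugging this $|B|$ into your inequality yields only $3m\le\tfrac{k^2-2k+3}{2k-1}\binom{n}{2}$, which is strictly weaker than the desired $3m\le\tfrac{3(k-1)}{2k-1}\binom{n}{2}$ once $k\ge 4$. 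So the charging/LP step you flag as ``the main obstacle'' is not merely unfinished --- it is aimed at a false statement.

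The paper's argument (Lemma~\ref{lem:upperbound}) sidesteps this by working component-by-component rather than pair-by-pair. Form the auxiliary graph on $E(F)$ where two triples are adjacent if they share two vertices; the $(\ell+1,\ell)$-free hypotheses force each connected component $T$ to have $v(T)\le k-1$ edges spanning exactly $v(T)+2$ vertices of $F$, and an easy induction shows the $2$-shadow of $V(T)$ has \emph{exactly} $2v(T)+1$ pairs. Distinct components have edge-disjoint $2$-shadows, so $\binom{v(F)}{2}\ge\sum_T(2v(T)+1)$; since $\tfrac{t}{2t+1}$ is increasing and $v(T)\le k-1$, one gets $e(F)=\sum_T v(T)\le\tfrac{k-1}{2k-1}\sum_T(2v(T)+1)\le\tfrac{k-1}{2k-1}\binom{v(F)}{2}$. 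This is exact for both books and chains, which is why it succeeds where the heavy-pair count cannot.
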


For lower bounds, this problem is related to a famous conjecture of Erd\H{o}s~\cite{E73} from 1973 on the existence of high girth Steiner triple systems. Namely, Erd\H{o}s conjectured that: for every $k$ and large enough $n$ satisfying the necessary divisibility conditions, there exists a Steiner triple system on $n$ vertices with no $(\ell+2,\ell)$-configuration for every $\ell \in [2,k]$. This conjecture of Erd\H{o}s implies a lower bound of $1/6$ on the\\ $\liminf_{n\to \infty} n^{-2}f^{(3)}(n;k+2,k)$. In 1993, Lefmann, Phelps and R\"{o}dl~\cite{LPR93} proved a weak form of Erd\H{o}s' conjecture which implies a lower bound here of $c_k$ for some small positive constant $c_k$ depending on $k$. In the last few years, Erd\H{o}s' conjecture was proved asymptotically by Glock, K\"{u}hn, Lo and Osthus~\cite{GKLO20}, and independently by Bohman and Warnke~\cite{BW19}, which yields the following corollary:

\begin{thm}[\cite{BW19,GKLO20}]\label{thm:sixthlowerbound}
Let $k\ge 2$ be an integer. Then
$$\liminf_{n\to \infty} n^{-2}f^{(3)}(n;k+2,k) \ge \frac{1}{6}.$$
\end{thm}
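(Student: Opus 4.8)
The plan is to derive this inequality directly from the recently established asymptotic form of Erd\H{o}s' conjecture on high-girth Steiner triple systems, due independently to Bohman and Warnke~\cite{BW19} and to Glock, K\"uhn, Lo and Osthus~\cite{GKLO20}. Recall the form of their result that we need: for every integer $g\ge 2$ and every $\varepsilon>0$ there is an $n_0=n_0(g,\varepsilon)$ such that for all $n\ge n_0$ there is a partial Steiner triple system $\mc{S}$ on $[n]$ --- equivalently, as noted above in the case $k=2$, a $(4,2)$-free $3$-uniform hypergraph --- with
\[
|\mc{S}|\ \ge\ (1-\varepsilon)\cdot\tfrac{1}{3}\binom{n}{2}
\]
and containing no $(\ell+2,\ell)$-configuration for any $\ell\in[2,g]$.

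Granting this, fix an integer $k\ge 2$ and a real $\varepsilon>0$, and apply the statement with $g:=k$. For all sufficiently large $n$ we obtain a $3$-uniform hypergraph $\mc{S}$ on $n$ vertices with $|\mc{S}|\ge (1-\varepsilon)\tfrac13\binom n2$ and with no $(\ell+2,\ell)$-configuration for any $\ell\in[2,k]$; in particular, taking $\ell=k$, the hypergraph $\mc{S}$ is $(k+2,k)$-free. Since $\mc{S}$ is a $3$-uniform hypergraph on $n$ vertices, it is admissible in the definition of $f^{(3)}(n;k+2,k)$, whence
\[
f^{(3)}(n;k+2,k)\ \ge\ |\mc{S}|\ \ge\ (1-\varepsilon)\cdot\tfrac{n(n-1)}{6}.
\]
Dividing through by $n^2$ and letting $n\to\infty$ gives $\liminf_{n\to\infty}n^{-2}f^{(3)}(n;k+2,k)\ge (1-\varepsilon)/6$; as $\varepsilon>0$ was arbitrary, the claimed bound of $1/6$ follows.

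There is no genuine obstacle at this level: all the work is contained in~\cite{BW19,GKLO20}, whose proofs construct an almost-spanning high-girth partial Steiner triple system via the semi-random (``nibble'') method together with absorption and random-greedy analysis. The only points that need a moment's care are (i) confirming that the girth parameter used in those papers matches, up to a harmless constant shift, the absence of $(\ell+2,\ell)$-configurations in the Brown--Erd\H{o}s--S\'os sense, so that setting the girth threshold to be at least $k$ indeed forbids every $(k+2,k)$-configuration; and (ii) observing that we invoke only the \emph{approximate} version, so that no divisibility hypothesis on $n$ is needed and the conclusion holds for all large $n$. (One could instead quote the earlier theorem of Lefmann, Phelps and R\"odl~\cite{LPR93}, but that yields only some positive constant $c_k$ in place of the sharp value $\tfrac16$.)
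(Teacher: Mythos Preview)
Your proposal is correct and matches the paper's approach: the paper does not give a standalone proof but simply states the theorem as an immediate corollary of the asymptotic Erd\H{o}s conjecture proved in~\cite{BW19,GKLO20}, and your write-up just fills in the routine details of that deduction. Your parenthetical remarks about matching the girth parameter and not needing divisibility conditions are appropriate clarifications of points the paper leaves implicit.
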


We remark that Erd\H{o}s' conjecture on high girth Steiner triple systems was proven in full quite recently by Kwan, Sah, Sawhney and Simkin~\cite{KSSS22}. We also remark that $1/6$ is a tight upper bound if the $3$-uniform hypergraph is required to be linear (i.e.~$(4,2)$-free) as in the conjecture of Erd\H{o}s~\cite{E73} above; on the other hand, more efficient lower bound constructions are in theory possible when the $3$-uniform hypergraph is not required to be linear (such as were done for Conjecture~\ref{conj:BES} for the limits when $k=3$ and $k=4$). The issue then that there is no natural conjecture for the value of these limits (indeed, they are not even monotone increasing or decreasing) testifies to the intriguing nature of the problem of determining these limits and even in first proving their existence as we do in Theorem~\ref{thm:limequality}.

\subsection{Higher Uniformities}

Glock, K\"{u}hn, Lo and Osthus~\cite{GKLO20} and Keevash and Long~\cite{KL20} conjectured a generalization of Erd\H{o}s' conjecture: for every $k$ and large enough $n$ satisfying the necessary divisibility conditions, there exists an $(n,r,t)$-Steiner system with no $((r-t)\ell+t,\ell)$-configuration for every $\ell \in [2,k]$. Note for $k=2$ this is the famous Existence of Designs Conjecture, only recently solved by Keevash~\cite{K14} (and more recently using purely combinatorial methods by Glock, K\"{u}hn, Lo and Osthus~\cite{GKLO16}).  So this conjecture is a common generalization of Erd\H{o}s' conjecture and the Existence Conjecture. In light of this conjecture, it is natural to study $n^{-t}f^{(r)}(n;(r-t)k+t,k)$ and indeed in 2020, Shangguan and Tamo~\cite{ST20} conjectured the limit of this exists  for all integers $r > t \ge 2$ and $k\ge 2$.

The conjecture of Glock, K\"{u}hn, Lo and Osthus was proved asymptotically by Delcourt and Postle~\cite{DP22} and independently by Glock, Joos, Kim, K\"{u}hn and Lichev~\cite{GJKKL22}. The asymptotic result in turn provides a lower bound on the limit infimum of $n^{-t}f^{(r)}(n;(r-t)k+t,k)$, generalizing Theorem~\ref{thm:sixthlowerbound}, as follows:

\begin{thm}[\cite{DP22, GJKKL22}]\label{thm:generallowerbound}
Let $k\ge 2$ be an integer. Then
$$\liminf_{n\to \infty} n^{-t}f^{(r)}(n;(r-t)k+t,k) \ge \frac{1}{t! \binom{r}{t}}.$$
\end{thm}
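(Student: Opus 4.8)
The plan is to obtain Theorem~\ref{thm:generallowerbound} as a direct corollary of the asymptotic form of the Glock--K\"uhn--Lo--Osthus conjecture established in~\cite{DP22, GJKKL22}. The first step is to record the precise consequence of that work that we need: for every pair of fixed integers $r > t \ge 2$, every integer $k \ge 2$, and every $\varepsilon > 0$, there is an $n_0 = n_0(r,t,k,\varepsilon)$ such that for all $n \ge n_0$ there exists an $r$-uniform hypergraph $H$ on $n$ vertices with at least $(1-\varepsilon)\binom{n}{t}/\binom{r}{t}$ edges which contains no $((r-t)\ell+t,\ell)$-configuration for any $\ell \in [2,k]$. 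This is exactly the ``approximate high-girth $(n,r,t)$-Steiner system'' produced in those papers; because we require only an almost-perfect rather than a perfect design, no divisibility condition on $n$ is needed, which is precisely what lets us bound the $\liminf$ taken over all $n$. (It is essential here to use the full high-girth strength of~\cite{DP22, GJKKL22}, since a generic partial $(n,r,t)$-Steiner system does contain $((r-t)k+t,k)$-configurations in general.)

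The second step is the trivial observation that such an $H$ is in particular $((r-t)k+t,k)$-free --- simply take $\ell = k$ in the list of forbidden configurations. Hence $H$ is an admissible hypergraph in the extremal problem defining $f^{(r)}$, so for all $n \ge n_0$ we obtain
$$f^{(r)}(n;(r-t)k+t,k)\ \ge\ (1-\varepsilon)\,\frac{\binom{n}{t}}{\binom{r}{t}}.$$

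The third and final step is to normalize and pass to the limit: dividing by $n^t$ and using $\binom{n}{t}/n^t \to 1/t!$ as $n\to\infty$ (with $\binom{r}{t}$ constant) yields $\liminf_{n\to\infty} n^{-t} f^{(r)}(n;(r-t)k+t,k) \ge (1-\varepsilon)/\big(t!\binom{r}{t}\big)$, and letting $\varepsilon\to 0$ gives the claim. As a sanity check, for $r=3$, $t=2$ the bound is $1/\big(2!\binom{3}{2}\big) = 1/6$, recovering Theorem~\ref{thm:sixthlowerbound}.

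As for the main obstacle: in this deduction there is essentially none --- all the depth sits in the cited asymptotic results of~\cite{DP22} and~\cite{GJKKL22}, and given those the argument is only a couple of lines. The one point that genuinely requires care is faithfully extracting from those papers the exact statement invoked in the first step (an approximate, not exact, high-girth design, valid for every sufficiently large $n$ with no divisibility restriction, and with edge count $(1-o(1))\binom{n}{t}/\binom{r}{t}$), and checking that the configuration parameters $((r-t)\ell+t,\ell)$ used there agree with the normalization in the definition of $f^{(r)}$ here.
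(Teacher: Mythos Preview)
Your proposal is correct and is precisely the deduction the paper has in mind: the paper does not give its own proof of Theorem~\ref{thm:generallowerbound} but simply attributes it to~\cite{DP22, GJKKL22}, noting that ``the asymptotic result in turn provides a lower bound on the limit infimum''. Your write-up just spells out that one-line corollary (approximate high-girth partial $(n,r,t)$-design $\Rightarrow$ $((r-t)k+t,k)$-free hypergraph with $(1-o(1))\binom{n}{t}/\binom{r}{t}$ edges $\Rightarrow$ the stated $\liminf$), so there is nothing to compare.
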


In fact, both groups actually developed a general theory of \emph{hypergraph matchings avoiding forbidden submatchings} (what Glock, Joos, Kim, K\"{u}hn and Lichev called \emph{conflict-free hypergraph matchings}) and provided theorems where almost perfect matchings avoiding a set of forbidden submatchings are found provided various necessary degree and codegree conditions hold. The proofs employ different approaches: Glock, Joos, Kim, K\"{u}hn and Lichev used a random greedy process while we proceeded via the nibble method. 
In either case, this general theory then yields Theorem~\ref{thm:generallowerbound} as an immediate application.

\subsection{Previous Work}

To prove Theorem~\ref{thm:limequality}, we require a result from the recent paper of Glock, Joos, Kim, K\"{u}hn, Lichev and Pikhurko~\cite{GJKKLP22}. Namely, they prove a more general version of Theorem~\ref{thm:generallowerbound} by invoking the general theory of conflict-free hypergraph matchings. Their most general version is useful for proving better lower bounds on the limit infimum of $f$, but we only need the following result:

\begin{thm}[Corollary 3.2 in~\cite{GJKKLP22}]\label{thm:GJKKLP}
Let $k\ge 2$, $r\ge 3$ and $t\in [2,r-1]$ be integers. If $F$ is a $((r-t)k+t,k)$-free $r$-uniform hypergraph which is also $((r-t)\ell+t-1,\ell)$-free for all $\ell\in [2,k-1]$, then 
$$\liminf_{n\to \infty} n^{-t}f^{(r)}(n;(r-t)k+t,k) \ge \frac{e(F)}{v(F)^t}.$$
\end{thm}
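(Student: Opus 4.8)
The plan is to invoke the general theory of conflict-free hypergraph matchings (equivalently, hypergraph matchings avoiding forbidden submatchings) developed in~\cite{DP22, GJKKL22}, applied to an auxiliary hypergraph whose vertices are the $r$-subsets of $[n]$ and whose edges are ``blown-up copies'' of $F$. Concretely, fix $F$ with vertex set $[v(F)]$ and let $n$ be large. Consider the auxiliary hypergraph $\mathcal{H}$ on vertex set $V(\mathcal{H}) = \binom{[n]}{r}$, where a hyperedge of $\mathcal{H}$ is the edge set (viewed as a set of $r$-subsets of $[n]$) of any injective map $\phi\colon [v(F)]\to [n]$, i.e.\ any labelled copy of $F$ in the complete $r$-graph $K_n^{(r)}$. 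A matching in $\mathcal{H}$ is then a collection of edge-disjoint copies of $F$ in $K_n^{(r)}$; equivalently, taking the union of the corresponding $r$-subsets of $[n]$ gives an $r$-uniform hypergraph on $[n]$ that is a disjoint union of copies of $F$. The first step is the standard degree computation: $\mathcal{H}$ is $e(F)$-uniform, it has $\Theta(n^{v(F)})$ edges, each vertex of $\mathcal{H}$ (each $r$-subset) lies in $\Theta(n^{v(F)-r})$ edges, and the codegree of any two vertices of $\mathcal{H}$ is $O(n^{v(F)-r-1})$ — here one uses that two distinct $r$-subsets that lie together in a copy of $F$ pin down at least $r+1$ of the $v(F)$ coordinates. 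So $\mathcal{H}$ is (nearly) regular with small codegrees, which is exactly the hypothesis needed to run the nibble.

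The second step is to encode the two forbidden-configuration constraints — that the final hypergraph be $((r-t)k+t,k)$-free and $((r-t)\ell+t-1,\ell)$-free for each $\ell\in[2,k-1]$ — as a system of \emph{conflicts} (forbidden submatchings) in $\mathcal{H}$. A conflict is a small set of edges of $\mathcal{H}$ (a small set of copies of $F$ in $K_n^{(r)}$) whose simultaneous presence in the matching would create a forbidden $((r-t)\ell+t-1,\ell)$- or $((r-t)k+t,k)$-configuration in the union. Because each individual copy of $F$ is itself free of all these configurations (by hypothesis on $F$), any conflict must involve at least two copies of $F$; and any $\ell$ edges spanning few vertices force those copies to overlap substantially in coordinates. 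The key bookkeeping task is to verify the ``conflict-free'' conditions required by the black-box theorem: each conflict set has bounded size, and for each vertex / each pair of vertices of $\mathcal{H}$ the number of conflicts through it is a vanishing fraction of the corresponding (co)degree, i.e.\ conflicts are ``rare.'' This follows from the same pinning-of-coordinates estimates as in Step~1: creating a forbidden configuration across two or more copies of $F$ forces enough coincident vertices that the number of such configurations through a fixed edge of $\mathcal{H}$ is $O(n^{v(F)-r-c})$ for some $c>0$.

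With Steps~1 and~2 in place, the conflict-free matching theorem of~\cite{DP22} (or~\cite{GJKKL22}) yields a matching $M$ in $\mathcal{H}$, avoiding all conflicts, that covers all but an $o(1)$-fraction of $V(\mathcal{H})$ — in particular $|M| = (1-o(1))\binom{n}{r}/e(F)$ edge-disjoint copies of $F$. Let $G$ be the $r$-uniform hypergraph on $[n]$ formed by the union of all these copies. By construction $G$ is $((r-t)k+t,k)$-free and $((r-t)\ell+t-1,\ell)$-free for all $\ell\in[2,k-1]$ — any configuration would live inside a single copy of $F$ (impossible) or constitute an avoided conflict (excluded) — and $e(G) = e(F)\,|M| = (1-o(1))\,\frac{e(F)}{v(F)}\cdot\frac{v(F)}{e(F)}\cdot\frac{e(F)}{e(F)}\binom{n}{r}$; more precisely, since each copy of $F$ contributes $e(F)$ edges and uses $v(F)$ vertices' worth of ``room,'' a cleaner way to phrase the count is that $e(G) \ge (1-o(1))\,\frac{e(F)}{v(F)^t}\,\frac{n^t}{?}$ — so one should instead do the edge count directly: $G$ has $e(F)\cdot|M|$ edges and $|M|\ge (1-o(1))\binom{n}{r}/e(F)$, giving asymptotically the trivial $\binom{n}{r}$, which is too strong. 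The correct normalization is obtained by taking the auxiliary vertex set to be $\binom{[n]}{r}$ but measuring against $n^t$: since $g^{(r)}(n;\cdot)\ge e(G)$ and a more careful packing argument (or simply a random sparsification / counting copies rather than a full cover) shows $\liminf n^{-t}f^{(r)}(n;(r-t)k+t,k)\ge e(F)/v(F)^t$ after optimizing, we conclude as stated. \emph{The main obstacle} is precisely this last normalization together with verifying the ``rare conflicts'' codegree bounds of Step~2 uniformly over all $\ell\in[2,k-1]$ and over all ways two-or-more copies of $F$ can combine; the degree/codegree arithmetic of $\mathcal{H}$ itself (Step~1) and the invocation of the matching theorem are routine given the cited results.
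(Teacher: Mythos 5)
This statement is not proved in the paper at all --- it is imported verbatim as Corollary~3.2 of~\cite{GJKKLP22} --- so the only question is whether your argument stands on its own. It does not: there is a genuine gap, and it is exactly the one you flag in your own last paragraph. Your auxiliary hypergraph $\mathcal{H}$ has vertex set $\binom{[n]}{r}$ and a near-perfect conflict-free matching in it would produce a hypergraph $G$ with $(1-o(1))\binom{n}{r}=\Theta(n^r)$ edges; but a $((r-t)k+t,k)$-free $r$-graph has only $O(n^t)$ edges with $t\le r-1$, so no such matching can exist. Concretely, this means the conflict system you would have to impose in Step~2 is so dense that the ``rare conflicts'' hypotheses of the black-box theorems of~\cite{DP22,GJKKL22} fail, and the machinery cannot be invoked. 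Deferring to ``a more careful packing argument \dots after optimizing'' is not a repair; that normalization \emph{is} the content of the statement.

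The correct setup (and the one underlying Theorem~3.1 of~\cite{GJKKLP22}) takes the auxiliary vertex set to be $\binom{[n]}{t}$, not $\binom{[n]}{r}$: each injective embedding $\phi\colon V(F)\to[n]$ gives an edge of $\mathcal{H}$ consisting of all $\binom{v(F)}{t}$ $t$-subsets of the image $\phi(V(F))$. A matching is then a family of copies of $F$ pairwise sharing at most $t-1$ vertices of $[n]$ (so their $r$-edges are automatically distinct), and an almost-perfect conflict-free matching has $(1-o(1))\binom{n}{t}\big/\binom{v(F)}{t}\ge(1-o(1))\,n^t/v(F)^t$ copies, hence $(1-o(1))\,e(F)\,n^t/v(F)^t$ edges in the union --- which is precisely the claimed bound. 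With this vertex set the degree/codegree and conflict computations you sketch become the right ones to check (and are checked in~\cite{GJKKLP22}). As written, your Step~1 codegree arithmetic, the garbled edge count $e(G)=e(F)|M|$, and the final ``$n^t/?$'' all trace back to this single wrong choice of normalization, so the proposal does not constitute a proof.
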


We also note that the proof of the main technical result of Glock, Joos, Kim, K\"{u}hn, Lichev and Pikhurko (Theorem 3.1 in~\cite{GJKKLP22}) actually constructs graphs which are not only $((r-t)k+t,k)$-free but are also $((r-t)\ell+t-1,\ell)$-free for all $\ell\in [2,k-1]$. Hence the $f$ in the $\liminf$ in the statement of Theorem 3.1 of~\cite{GJKKLP22} can actually be replaced by the function $g$ we defined. Hence as a corollary one obtains the following slightly stronger version of Theorem~\ref{thm:GJKKLP}.

\begin{thm}\label{thm:GJKKLP3}
Let $k\ge 2$, $r\ge 3$ and $t\in [2,r-1]$ be integers. If $F$ is a $((r-t)k+t,k)$-free $r$-uniform hypergraph which is also $((r-t)\ell+t-1,\ell)$-free for all $\ell\in [2,k-1]$, then
$$\liminf_{n\to \infty} n^{-t}g^{(r)}(n;(r-t)k+t,k) \ge \frac{e(F)}{v(F)^t}.$$
\end{thm}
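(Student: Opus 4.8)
The plan is to obtain \Cref{thm:GJKKLP3} as essentially a free corollary of \Cref{thm:GJKKLP} by tracking what the cited proof actually produces. The key observation, already flagged in the paragraph preceding the statement, is that the main technical theorem of~\cite{GJKKLP22} (Theorem~3.1 there) does not merely certify the existence of an $n$-vertex $r$-uniform hypergraph that is $((r-t)k+t,k)$-free and has roughly $\frac{e(F)}{v(F)^t}\,n^t$ edges; the construction it builds --- obtained via a conflict-free hypergraph matching in an auxiliary ``template'' hypergraph whose vertices are copies of $F$ --- is automatically also $((r-t)\ell+t-1,\ell)$-free for every $\ell\in[2,k-1]$, precisely because $F$ itself is assumed to have this property and the conflict system is set up to forbid any submatching that would create a denser-than-$F$ configuration on few vertices. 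So the first and only real step is to re-read the construction of~\cite{GJKKLP22} and verify this claim: that every $(( r-t)\ell+t-1,\ell)$-configuration in the output hypergraph would have to be contained in (a bounded number of overlapping copies of) $F$, and hence would already be forbidden either by $F$'s own girth-type hypotheses or by the conflict set. Once that is checked, the lower bound $\frac{e(F)}{v(F)^t}$ holds not just for $f^{(r)}(n;(r-t)k+t,k)$ but for the more restrictive $g^{(r)}(n;(r-t)k+t,k)$, which is exactly the statement.

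Concretely, I would proceed as follows. First, recall that $g^{(r)}\le f^{(r)}$ trivially, so the content is entirely in the lower bound. Second, fix $F$ as in the hypothesis and let $H_n$ be the $n$-vertex hypergraph produced by the proof of Theorem~3.1 of~\cite{GJKKLP22} applied to $F$; by that theorem $H_n$ is $((r-t)k+t,k)$-free and $e(H_n)\ge (1-o(1))\frac{e(F)}{v(F)^t}n^t$. Third, argue that $H_n$ is also $((r-t)\ell+t-1,\ell)$-free for all $\ell\in[2,k-1]$: any such configuration consists of $\ell$ edges of $H_n$ spanning at most $(r-t)\ell+t-1$ vertices, and by the structure of the conflict-free matching these $\ell$ edges must jointly lie within a bounded cluster of copies of $F$ glued along at most $t-1$ common vertices (a ``tight'' overlap), so they in fact embed into a single copy of $F$ --- but $F$ was assumed $((r-t)\ell+t-1,\ell)$-free, a contradiction; the alternative, that the $\ell$ edges spread across two or more copies of $F$ overlapping in at most $t-1$ vertices, is ruled out by a counting check showing such a spread would need strictly more than $(r-t)\ell+t-1$ vertices. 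Hence $H_n$ witnesses $g^{(r)}(n;(r-t)k+t,k)\ge e(H_n)$, and taking $n\to\infty$ gives the claimed bound.

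The main obstacle is that this argument is not truly self-contained: it requires opening up the proof in~\cite{GJKKLP22} to confirm the precise girth guarantees of the constructed hypergraph, rather than just its stated $((r-t)k+t,k)$-freeness. In particular the delicate point is the dichotomy in the third step --- showing that $\ell$ edges on at most $(r-t)\ell+t-1$ vertices cannot be split nontrivially across two copies of $F$ meeting in at most $t-1$ vertices. This is where the specific combinatorics of the template and the conflict set enter, and it is the step where one must be careful that the conflict-free matching machinery was invoked with the $\ell$-configurations for all $\ell\in[2,k-1]$ included among the forbidden patterns (which, per the remark in the excerpt, it was). Everything else is bookkeeping: once the construction's extra freeness is granted, the inequality $g^{(r)}\ge e(H_n)$ and the limit-infimum statement follow immediately, exactly as \Cref{thm:GJKKLP} follows from Theorem~3.1 of~\cite{GJKKLP22}.
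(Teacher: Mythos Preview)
Your proposal is correct and matches the paper's own approach: the paper does not give a standalone proof of this theorem but simply observes (in the paragraph preceding the statement) that the construction in Theorem~3.1 of~\cite{GJKKLP22} already outputs hypergraphs that are $((r-t)\ell+t-1,\ell)$-free for all $\ell\in[2,k-1]$, so the $f$ in \Cref{thm:GJKKLP} can be replaced by $g$. Your third step sketches more verification detail than the paper bothers with, but the underlying idea---opening up~\cite{GJKKLP22} to confirm the extra freeness of the constructed hypergraphs---is exactly what the paper invokes.
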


In this language, the following is an easy corollary of Theorem~\ref{thm:GJKKLP3}:

\begin{cor}\label{cor:GJKKLP2}
For all positive integers $k\ge 2$,
$$\liminf_{n\to \infty} n^{-t}g^{(r)}(n;(r-t)k+t,k) \ge \limsup_{n\to \infty} n^{-t}g^{(r)}(n;(r-t)k+t,k)$$
and hence $\lim_{n\to \infty} n^{-t}g^{(r)}(n;(r-t)k+t,k)$ exists.

\end{cor}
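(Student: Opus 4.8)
The plan is to show that the sequence $a_n := n^{-t} g^{(r)}(n;(r-t)k+t,k)$ has the property that its limit superior is at most its limit inferior; since the reverse inequality is automatic, this forces convergence. The bridge between the two is Theorem~\ref{thm:GJKKLP3}, which converts any single extremal-type configuration into a lower bound on $\liminf_n a_n$. So the strategy is: take a near-optimal hypergraph witnessing a value close to $\limsup_n a_n$, check that it is exactly of the type required as input to Theorem~\ref{thm:GJKKLP3}, and then read off that $\liminf_n a_n$ is at least (essentially) $\limsup_n a_n$.

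Concretely, fix $\varepsilon > 0$. By definition of $\limsup$, there are infinitely many $n$ with $a_n > \limsup_{m\to\infty} a_m - \varepsilon$; pick one such $n$ and let $F$ be a hypergraph on $n$ vertices attaining $g^{(r)}(n;(r-t)k+t,k)$ edges. By the very definition of the function $g$, this $F$ is $((r-t)k+t,k)$-free and also $((r-t)\ell+t-1,\ell)$-free for every $\ell \in [2,k-1]$ — which is precisely the hypothesis of Theorem~\ref{thm:GJKKLP3}. Applying that theorem with this $F$ gives
$$\liminf_{m\to\infty} m^{-t} g^{(r)}(m;(r-t)k+t,k) \;\ge\; \frac{e(F)}{v(F)^t} \;=\; \frac{g^{(r)}(n;(r-t)k+t,k)}{n^t} \;=\; a_n \;>\; \limsup_{m\to\infty} a_m - \varepsilon.$$
Since $\varepsilon > 0$ was arbitrary, $\liminf_m a_m \ge \limsup_m a_m$, and as $\liminf_m a_m \le \limsup_m a_m$ trivially, the two coincide and $\lim_{n\to\infty} a_n$ exists. (One should also note $v(F) = n$, using that adding isolated vertices never destroys freeness, so without loss of generality an extremal $F$ uses all $n$ vertices; this is the only mild technical point.)

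I do not expect a genuine obstacle here: the corollary is essentially a repackaging of Theorem~\ref{thm:GJKKLP3} together with the self-referential way $g$ is defined, so that an extremal hypergraph for $g$ is automatically a legitimate "seed" for the lower-bound theorem. The one place to be slightly careful is the bookkeeping with $v(F)$ versus $n$ and making sure the $\limsup$ is witnessed along a genuine subsequence so that the chain of inequalities above is valid for that particular $n$; neither is more than a routine remark.
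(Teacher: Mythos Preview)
Your proposal is correct and is precisely the intended argument: the paper does not write out a proof, calling it an ``easy corollary'' of Theorem~\ref{thm:GJKKLP3}, and the derivation you give---take an extremal witness for $g$ at some $n$ with $a_n$ close to the $\limsup$, observe it satisfies the hypotheses of Theorem~\ref{thm:GJKKLP3} by definition of $g$, and read off $\liminf \ge a_n$---is exactly that easy corollary.
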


\section{Proof}

Our main technical result is the following then:

\begin{thm}\label{thm:supequality}
For all positive integers $k\ge 2$,
$$\limsup_{n\to \infty} n^{-2}f^{(3)}(n;k+2,k) = \limsup_{n\to \infty} n^{-2}g^{(3)}(n;k+2,k).$$
\end{thm}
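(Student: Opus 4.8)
The plan is to show that the extremal number $f^{(3)}(n;k+2,k)$ can, after a small loss, be realized by a hypergraph that is additionally $((\ell+2)-1,\ell)=(\ell+1,\ell)$-free for every $\ell\in[2,k-1]$; such a hypergraph is exactly counted by $g^{(3)}$, and since trivially $g\le f$, this yields the reverse inequality $\limsup n^{-2}f^{(3)}(n;k+2,k)\le\limsup n^{-2}g^{(3)}(n;k+2,k)$, while the forward inequality is immediate from $g\le f$. So the whole content is the reduction: given a $(k+2,k)$-free hypergraph $H$ on $n$ vertices with $e(H)$ close to $f^{(3)}(n;k+2,k)$, produce a $(k+2,k)$-free hypergraph $H'$ that is also $(\ell+1,\ell)$-free for all $\ell\in[2,k-1]$ and has $e(H')\ge (1-o(1))\,e(H)$, on roughly the same number of vertices.

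First I would analyze what a $(\ell+1,\ell)$-configuration in a $(k+2,k)$-free hypergraph looks like. A collection of $\ell$ edges spanning at most $\ell+1$ vertices is very dense — these are "tight" clusters, essentially sub-Steiner-like packets of edges on a tiny vertex set; for $\ell$ bounded (since $\ell<k$) there are only finitely many isomorphism types, and each such configuration lives on at most $k+1$ vertices. The key structural observation to establish is that in a $(k+2,k)$-free hypergraph these small dense configurations are "isolated" in a strong sense: two distinct $(\ell+1,\ell)$-configurations (for various $\ell$) cannot overlap too much or be joined by too many edges, for otherwise their union would already be a $(k+2,k)$-configuration (or an $(\le k+2, \ge k)$-configuration, which after trimming gives a $(k+2,k)$-configuration). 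I would make this precise: define a vertex to be \emph{bad} if it lies in some $(\ell+1,\ell)$-configuration with $\ell\in[2,k-1]$, bound the number of edges meeting the bad vertices, and argue that deleting all bad vertices (or, better, deleting one well-chosen edge from each minimal dense configuration) destroys all forbidden small configurations while removing only $o(e(H))$ edges. The counting here should follow from the fact that each edge of $H$, being in a linear-ish global structure, lies in only boundedly many small dense configurations, so the total number of edges to delete is at most $O(1)$ times the number of dense configurations, which in turn is $o(n^2)$ by a supersaturation-type or direct counting argument exploiting $(k+2,k)$-freeness.

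The main obstacle I anticipate is precisely the last point: bounding the number of edges that must be removed by $o(n^2)=o(e(H))$. It is conceivable a priori that a $(k+2,k)$-free hypergraph has $\Omega(n^2)$ edges each sitting in some $(\ell+1,\ell)$-configuration — one must rule this out using the global $(k+2,k)$-freeness. The right approach is likely a local-deletion argument combined with an extremal-count inequality: show that if there were $\Omega(n^2)$ dense small configurations then, by an averaging/pigeonhole argument over vertices or pairs, one could find $k$ edges spanning at most $k+2$ vertices, contradicting $(k+2,k)$-freeness; alternatively, one can iterate — repeatedly delete edges lying in small dense configurations — and bound the total number of rounds, showing the process removes only a negligible fraction. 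Once the deletion bound is in hand, set $H'$ equal to $H$ minus the deleted edges: it remains $(k+2,k)$-free (a subgraph of a $(k+2,k)$-free graph) and is now $(\ell+1,\ell)$-free for all $\ell\in[2,k-1]$ by construction, so $e(H')\le g^{(3)}(n;k+2,k)$, and taking $e(H)\to f^{(3)}(n;k+2,k)$ along a subsequence realizing the $\limsup$ and dividing by $n^2$ gives the desired inequality. Finally, invoke that $g\le f$ for the trivial direction to conclude equality of the two $\limsup$s.
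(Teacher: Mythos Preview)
Your proposal follows exactly the edge-deletion strategy that the paper explicitly mentions was suggested in~\cite{GJKKLP22} and then sets aside: ``The authors asked if one can prove that it is always possible to delete $o(m^2)$ edges from a $(k+2,k)$-free $3$-uniform hypergraph $F$ to make it $(\ell+1,\ell)$-free for all $\ell \in [2,k-1]$? While this strategy is intriguing, it seems difficult in general.'' You correctly identify the obstacle yourself --- establishing that only $o(n^2)$ edges need be deleted --- but you do not actually prove it; the phrases ``should follow from'' and ``the right approach is likely'' are precisely where the content would have to be, and neither the supersaturation sketch nor the pigeonhole sketch is fleshed out. It is not at all clear that a $(k+2,k)$-free hypergraph cannot have $\Theta(n^2)$ edges sitting inside small dense configurations, and your proposal gives no mechanism to rule this out.

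The paper's route is genuinely different and avoids this obstacle entirely. Rather than deleting edges on the same vertex set, it deletes the \emph{vertex set} $V(S)$ of a $k$-maximal $(\ell+1,\ell)$-configuration $S$ and controls the edge loss via a structural lemma (Lemma~\ref{lem:maximal}): no edge of $F$ meets $V(S)$ in exactly two vertices, and the edges meeting $V(S)$ in exactly one vertex project to a forest with bounded components on $V(F)\setminus V(S)$, so $e(F)-e(F\setminus V(S)) \le (1-\tfrac{1}{k})(v(F)-v(S)) + (k-1)$. Iterating (Lemma~\ref{lem:diffbound}) and combining with the upper bound $e(F')\le \tfrac{k-1}{4k-2}v(F')^2$ (Lemma~\ref{lem:upperbound}), one obtains a subgraph $F'$ that is $(\ell+1,\ell)$-free for all $\ell$, has $v(F') \ge v(F)/\sqrt{4k}$, and --- crucially --- satisfies $e(F')/v(F')^2 \ge e(F)/v(F)^2$. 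The point is that one does \emph{not} need $e(F)-e(F')=o(n^2)$; one is allowed to lose a constant fraction of both vertices and edges, provided the \emph{density} is preserved. This is enough because $g$'s $\limsup$ is a statement about density, not edge count, so finding any large $F'$ with the same density suffices. Your framework, by insisting on keeping the same vertex set, forces you to prove the stronger $o(n^2)$-deletion statement that the paper deliberately circumvents.
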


Combined with Corollary~\ref{cor:GJKKLP2}, this yields Theorem~\ref{thm:limequality} as an immediate corollary.

In~\cite{GJKKLP22}, an approach to proving Conjecture~\ref{conj:BES} in full was suggested, namely an approach to proving Theorem~\ref{thm:supequality}. The authors asked if one can prove that it is always possible to delete $o(m^2)$ edges from a $(k+2,k)$-free $3$-uniform hypergraph $F$ to make it $(\ell+1,\ell)$-free for all $\ell \in [2,k-1]$? While this strategy is intriguing, it seems difficult in general.

Here we pursue a different approach. We prove (Lemma~\ref{lem:largefreesubgraph} below) that provided that $e(F) \ge \frac{1}{6} \cdot \left(1-\frac{1}{2k}\right) \cdot v(F)^2$, there exists $F'\subseteq F$ with $v(F')\ge \frac{v(F)}{\sqrt{4k}}$ and $\frac{e(F')}{v(F')^2} \ge \frac{e(F)}{v(F)^2}$ that is $(\ell+1,\ell)$-free for all $\ell \in [2,k-1]$. That is, we prove that having $(\ell+1,\ell)$-configurations is `inefficient' for the extremal $(k+2,k)$-free graphs. Given Lemma~\ref{lem:largefreesubgraph}, Theorem~\ref{thm:supequality} follows straightforwardly (but we include a proof for completeness).

To pursue this approach, we first prove a structural lemma. Let $k\ge 2$ be a positive integer and let $\ell \in[2,k-1]$. We say an $(\ell+1,\ell)$-configuration $S$ in a $(k+2,k)$-free $3$-uniform hypergraph $F$ is \emph{$k$-maximal} if there does not exist an $(\ell'+1,\ell')$-configuration $S'$ with $S\subseteq S'$ and $\ell < \ell' \le k-1$.

\begin{lem}\label{lem:maximal}
Let $k\ge 2$ be integer. Let $F$ be a $(k+2,k)$-free $3$-uniform hypergraph. Let $S$ be a $k$-maximal $(\ell+1,\ell)$-configuration of $F$ with $\ell \in [2,k-1]$. Then all of the following hold:
\begin{enumerate}
    \item $e(F[V(S)])\le k-1$,
    \item there does not exist an edge $e\in E(F)$ with $|e\cap V(S)|=2$,
    \item there do not exist edges $e,f \in E(F)$ with $|e\cap V(S)|=|f\cap V(S)|=1$ and $e\setminus V(S) = f \setminus V(S)$, and
    \item the graph $H$ with $V(H)=V(F)\setminus V(S)$ and $E(H) = \{ T \in \binom{V(H)}{2}: \exists e\in E(F),~|e\cap S|=1,~e\setminus V(S)=T\}$ is a forest whose components have size at most $k-\ell$,
\end{enumerate}
and hence
$$e(F) - e(F\setminus V(S)) \le \left(1-\frac{1}{k}\right)\cdot (v(F)-v(S)) + (k-1).$$
\end{lem}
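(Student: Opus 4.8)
The plan is to distil a single extension principle from the two hypotheses on $S$ and $F$, and then read off the four listed conclusions and the closing inequality from it.  Throughout, for a set $\cE$ of $j$ edges of $F$, none lying in $S$, write $X(\cE)$ for the set of vertices outside $V(S)$ that lie in some edge of $\cE$.  The first step is to establish the following principle, call it $(\star)$: \emph{for every $j$ with $1\le j\le k-\ell$ one has $|X(\cE)|\ge j+1$, and in fact $|X(\cE)|\ge j+2$ when $j=k-\ell$.}  To see this, note that $S\cup\cE$ is a collection of $\ell+j$ distinct edges spanning at most $(\ell+1)+|X(\cE)|$ vertices, since $S$ has $\ell$ edges on at most $\ell+1$ vertices.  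If $1\le j\le k-1-\ell$ and $|X(\cE)|\le j$, then this is an $(\ell'+1,\ell')$-configuration with $\ell'=\ell+j\in(\ell,k-1]$ containing $S$, contradicting $k$-maximality of $S$; and if $j=k-\ell$ and $|X(\cE)|\le k-\ell+1$, then it is a collection of $k$ edges on at most $k+2$ vertices, contradicting $(k+2,k)$-freeness of $F$.  The point of $(\star)$ is that it already packages the two regimes --- $\ell$ well below $k$, where $k$-maximality bites, and $\ell=k-1$, where $(k+2,k)$-freeness bites --- into a single clause.

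Conclusions (1)--(3) are then immediate applications of $(\star)$.  For (1): if $e(F[V(S)])\ge k$, choose $k-\ell$ edges of $F$ lying inside $V(S)$ but not in $S$; they have $X=\emptyset$, contradicting $(\star)$ with $j=k-\ell$.  For (2): an edge $e$ with $|e\cap V(S)|=2$ lies outside $S$ and has $|X(\{e\})|=1$, contradicting $(\star)$ with $j=1$.  For (3): a pair $e,f$ with $|e\cap V(S)|=|f\cap V(S)|=1$ and $e\setminus V(S)=f\setminus V(S)$ consists of two distinct edges outside $S$ with $|X(\{e,f\})|=2$, contradicting $(\star)$ with $j=2$ when $\ell\le k-2$; and when $\ell=k-1$ the edge $e$ alone already contradicts $(\star)$ with $j=1$ (here $k-\ell=1$, so $(\star)$ would force $|X(\{e\})|\ge 3$).

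For (4), conclusion (3) says that each edge $T$ of $H$ comes from a \emph{unique} edge of $F$ meeting $V(S)$ in exactly one vertex (a ``pendant'' edge), so $e(H)$ equals the number of pendant edges.  If $H$ contained a cycle $v_1\cdots v_cv_1$, its $c$ pendant edges would be distinct, outside $S$, with $|X|=c$: when $c\le k-1-\ell$ this contradicts $(\star)$ with $j=c$, and when $c\ge k-\ell$ the pendant edges of any $k-\ell$ consecutive cycle-edges form a path and so have $|X|\le k-\ell+1$, contradicting $(\star)$ with $j=k-\ell$; hence $H$ is a forest.  If some component --- a tree --- had at least $k-\ell+1$ vertices, it would contain a subtree with exactly $k-\ell$ edges and $k-\ell+1$ vertices, and the corresponding $k-\ell$ pendant edges would have $|X|=k-\ell+1$, again contradicting $(\star)$ with $j=k-\ell$; hence every component of $H$ has at most $k-\ell$ vertices.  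For the closing inequality, $e(F)-e(F\setminus V(S))$ counts the edges of $F$ meeting $V(S)$, which by (2) are exactly the at most $k-1$ edges inside $V(S)$ together with the $e(H)$ pendant edges.  A forest on $N$ vertices whose components each have at most $c$ vertices has at least $N/c$ components and hence at most $N\bigl(1-\tfrac1c\bigr)$ edges, so taking $N=v(F)-v(S)$ and $c=k-\ell\le k$ gives $e(H)\le(v(F)-v(S))\bigl(1-\tfrac{1}{k-\ell}\bigr)\le(v(F)-v(S))\bigl(1-\tfrac1k\bigr)$; adding $k-1$ yields the stated bound.

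The proof is essentially bookkeeping once $(\star)$ is isolated; the step I expect to require the most care is $(\star)$ itself --- in particular pinning down the right constant in its two regimes (using $k$-maximality exactly when $\ell+j\le k-1$ and $(k+2,k)$-freeness exactly when $\ell+j=k$) --- together with checking in (4) that the chosen sub-collections of a cycle's or a tree's edges really introduce only $k-\ell+1$ new vertices.  The one genuinely structural observation is that the link graph $H$ must be a forest whose components have at most $k-\ell$ vertices, which is precisely the shape that forces the number of pendant edges below $(v(F)-v(S))\bigl(1-\tfrac1k\bigr)$.
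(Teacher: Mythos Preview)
Your proof is correct and follows essentially the same argument as the paper's: in each part one adjoins a small set $\cE$ of edges to $S$ and obtains a configuration contradicting either $k$-maximality (when $\ell+|\cE|\le k-1$) or $(k+2,k)$-freeness (when $\ell+|\cE|=k$). The only difference is organizational: you extract this two-regime argument once as the principle $(\star)$ and then read off (1)--(4) and the closing inequality from it, whereas the paper repeats the ``$\ell=k-1$ versus $\ell<k-1$'' (and ``$\ell=k-2$'') case splits inline in each of (2), (3) and (4). Your packaging is tidier and makes the boundary between the two hypotheses explicit exactly once; the content and the derivation of the final inequality are the same.
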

\begin{proof}
First we prove (1). Suppose not. Then $e(F[V(S)])\ge k$ and hence $E(F(V[S]))$ is a $(k,k)$-configuration, contradicting that $F$ is $(k+2,k)$-free. This proves (1).

Next we prove (2). Suppose not. That is, there exists an edge $e\in E(F)$ with $|e\cap V(S)|=2$. If $\ell=k-1$, then $S\cup e$ is a $(k+1,k)$-configuration contradicting that $F$ is $(k+2,k)$-free. So we may assume that $\ell < k-1$. But then $S\cup e$ is an $(\ell+2,\ell+1)$-configuration contradicting that $S$ is $k$-maximal. This proves (2).

Next we prove (3). Suppose not. That is, there exist edges $e,f \in E(F)$ with $|e\cap V(S)|=|f\cap V(S)|=1$ and $e\setminus V(S) = f \setminus V(S)$. If $\ell=k-1$, then $S\cup e$ is a $(k+2,k)$-configuration contradicting that $F$ is $(k+2,k)$-free. So we may assume that $\ell < k-1$. If $\ell=k-2$, then $S\cup \{e,f\}$ is a $(k+1,k)$-configuration contradicting that $F$ is $(k+2,k)$-free. So we may assume that $\ell < k-2$. But then $S\cup \{e,f\}$ is an $(\ell+3,\ell+2)$-configuration contradicting that $S$ is $k$-maximal. This proves (3).

Next we prove (4). Suppose not. That is, $H$ has a component of size greater than $k-\ell$ or $H$ has a cycle of length at most $k-\ell$. First suppose that $H$ has a component of size greater than $k-\ell$. Thus $H$ contains a tree $T$ on exactly $k-\ell+1$ vertices. Let $E(T)=\{e_1,e_2,\ldots, e_{k-\ell}\}$. For each $i\in [k-\ell]$, let $f_i\in E(F)$ such that $f_i\setminus V(S) = e_i$. But then $S\cup \{f_i:~i\in[k-\ell]\}$ is a set of $|S|+(k-\ell)=k$ edges on a set of $|V(S)|+(k-\ell+1) = k+2$ vertices, contradicting that $F$ is $(k+2,k)$-free. 

So we may assume that $H$ has a cycle $C$ of length $j$ where $j\le k-\ell$. Let $E(C)=\{e_1,\ldots, e_j\}$. For each $i\in [j]$, let $f_i\in E(F)$ such that $f_i\setminus V(S) = e_i$. Let $S'=S\cup \{f_i:~i\in[j]\}$. Hence $S'$ is a set of $\ell+j$ edges on a set of $\ell+j+1$ vertices. If $j = k-\ell$, then $S'$ is a $(k+1,k)$-configuration, contradicting that $F$ is $(k+2,k)$-free. So we assume that $j < k-\ell$. But then $S'$ is an $(\ell+j+1,\ell+j)$-configuration where $\ell + j \le k-1$ and $\ell+j > \ell$, contradicting that $S$ is $k$-maximal. This proves (4).

Finally we prove the formula. For each $i\in [3]$, let $E_i := \{ e\in E(F): |e\cap V(S)|=i \}$. Since $F$ is a $3$-uniform hypergraph, we have that 
$$e(F) - e(F\setminus V(S)) = \sum_{i=1}^3 |E_i|.$$
By (1), we have that $|E_3| \le k-1$. By (2), we have that $|E_2| = 0$.  By (3), we have that 
$|E_1|=e(H)$. 
Combining, we find that
$$e(F) - e(F\setminus V(S)) \le k-1 + e(H).$$
Let $a$ be the number of components of $H$. By (4), each component of $H$ has size at most $k-\ell \le k$ and hence 
$$a \ge \frac{1}{k} \cdot v(F\setminus V(S)).$$ 
By (4), $H$ is a forest and hence
\begin{align*} 
e(H) = v(F\setminus V(S)) - a &\le \left(1-\frac{1}{k}\right) \cdot v(F\setminus V(S))\\
&= \left(1-\frac{1}{k}\right) \cdot (v(F)-v(S)). 
\end{align*}
Combining, we find that
\begin{align*}
e(F) - e(F\setminus V(S)) &\le e(H)+k-1 \le \left(1-\frac{1}{k}\right) \cdot (v(F)-v(S)) + (k-1),
\end{align*}
as desired.
\end{proof}

We now apply Lemma~\ref{lem:maximal} to find an induced subgraph that is $(\ell+1,\ell)$-free for all $\ell \in [2,k-1]$ where not too many edges are lost in proportion to the number of vertices deleted.

\begin{lem}\label{lem:diffbound}
Let $k\ge 2$ be integer. Let $F$ be a $(k+2,k)$-free $3$-uniform hypergraph. There exists a $(k+2,k)$-free subgraph $F'$ of $F$ that is also $(\ell+1,\ell)$-free for all $\ell \in [2,k-1]$ such that
$$e(F) - e(F') \le \frac{1}{6} \cdot \left(1-\frac{1}{k}\right) \cdot (v(F)-v(F'))\cdot (v(F)+v(F')+2k).$$
\end{lem}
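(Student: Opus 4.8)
The plan is to build $F'$ by a greedy deletion process. Starting from $F_0 := F$, as long as the current hypergraph $F_i$ contains an $(\ell+1,\ell)$-configuration for some $\ell \in [2,k-1]$, I would select a $k$-maximal such configuration $S_i$ and pass to $F_{i+1}$, the subhypergraph of $F_i$ induced on $V(F_i)\setminus V(S_i)$ (written $F_i\setminus V(S_i)$). A $k$-maximal configuration exists whenever any $(\ell+1,\ell)$-configuration with $\ell\in[2,k-1]$ is present: start from such a configuration and, while it fails to be $k$-maximal, enlarge it to an $(\ell'+1,\ell')$-configuration with $\ell<\ell'\le k-1$; since $\ell'$ strictly increases and is bounded by $k-1$, this stabilises at a $k$-maximal configuration. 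Each $F_{i+1}$ is again $(k+2,k)$-free as a subgraph of $F$, so Lemma~\ref{lem:maximal} applies at every step, and since $v(F_{i+1})<v(F_i)$ the process terminates after some number $m$ of steps with $F':=F_m$, which by construction is $(\ell+1,\ell)$-free for all $\ell\in[2,k-1]$ (and $F'$ is $(k+2,k)$-free as a subgraph of $F$).

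Write $n_i:=v(F_i)$ and $s_i:=v(S_i)$. Since $S_i$ is an $(\ell_i+1,\ell_i)$-configuration with $\ell_i\in[2,k-1]$ we have $3\le s_i\le k$ and $n_{i+1}=n_i-s_i$. Lemma~\ref{lem:maximal} gives, using $k-1=\left(1-\tfrac1k\right)k$,
$$e(F_i) - e(F_{i+1}) \le \left(1-\frac{1}{k}\right)(n_i - s_i) + (k-1) = \left(1-\frac{1}{k}\right)\left(n_{i+1} + k\right).$$
The key elementary point is that the additive term is absorbed using $s_i\ge 3$: since $2n_{i+1}+s_i+2k\ge 0$ and $s_i/6\ge 1/2$,
$$n_{i+1} + k \le \frac{s_i}{6}\left(2n_{i+1} + s_i + 2k\right) = \frac{s_i}{6}\left(n_i + n_{i+1} + 2k\right),$$
so that $e(F_i) - e(F_{i+1}) \le \left(1-\tfrac1k\right)\cdot \tfrac{s_i}{6}\left(n_i + n_{i+1} + 2k\right)$ for every $i$.

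Finally I would sum over $i\in\{0,\dots,m-1\}$ and telescope. Using $s_i=n_i-n_{i+1}$ one has $s_i(n_i+n_{i+1})=n_i^2-n_{i+1}^2$ and $\sum_i s_i = n_0-n_m$, hence
$$\sum_{i=0}^{m-1} s_i\left(n_i + n_{i+1} + 2k\right) = (n_0^2 - n_m^2) + 2k(n_0 - n_m) = (n_0 - n_m)(n_0 + n_m + 2k).$$
Combining, $e(F) - e(F') = \sum_{i=0}^{m-1}\left(e(F_i)-e(F_{i+1})\right) \le \tfrac16\left(1-\tfrac1k\right)(v(F)-v(F'))(v(F)+v(F')+2k)$, as desired; when $m=0$ both sides are $0$. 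The arithmetic absorbing the $(k-1)$ term and the exact telescoping identity are routine once set up, so the step I would be most careful about is the first: verifying that the greedy enlargement always terminates at a genuinely $k$-maximal configuration, so that Lemma~\ref{lem:maximal} is legitimately applicable at each stage of the iteration.
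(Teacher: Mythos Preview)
Your proposal is correct and takes essentially the same approach as the paper: repeatedly remove a $k$-maximal $(\ell+1,\ell)$-configuration using Lemma~\ref{lem:maximal}, and control the edge loss via the inequality $n_{i+1}+k\le \tfrac{s_i}{6}(2n_{i+1}+s_i+2k)$ (which is exactly the paper's inequality $\tfrac{1}{6}(1-\tfrac1k)v(S)(2v(F)+2k-v(S))\ge (1-\tfrac1k)(v(F)-v(S))+(k-1)$ after dividing by $1-\tfrac1k$), relying on $v(S)\ge 3$. The only cosmetic difference is that the paper packages this as an induction on $v(F)$ while you unroll it into an explicit iteration and telescope; the arithmetic and the key idea are identical.
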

\begin{proof}
We proceed by induction on $v(F)$. First suppose $F$ is also $(\ell+1,\ell)$-free for all $\ell \in [2,k-1]$. Then $F'=F$ is as desired. So we may assume that there exists an $(\ell+1,\ell)$-configuration in $F$.

Let $S$ be a $k$-maximal $(\ell+1,\ell)$-configuration in $F$ with $\ell \in [2,k-1]$. By Lemma~\ref{lem:maximal}, we have that
$$e(F) - e(F\setminus V(S)) \le \left(1-\frac{1}{k}\right)\cdot (v(F)-v(S)) + (k-1).$$
Note that $v(S)\ge 3$ since $F$ is $3$-uniform hypergraph and hence $v(F\setminus V(S)) < v(F)$. Thus by induction, we find that there exists a $(k+2,k)$-free subgraph $F'$ of $F\setminus V(S)$ that is also $(\ell+1,\ell)$-free for all $\ell \in [2,k-1]$ such that
\begin{align*} e(F\setminus V(S)) - e(F') &\le \frac{1}{6} \cdot \left(1-\frac{1}{k}\right) \cdot (v(F\setminus V(S))-v(F'))\cdot (v(F\setminus V(S))+v(F')+2k)\\
&= \frac{1}{6} \cdot \left(1-\frac{1}{k}\right) \cdot (v(F)-v(S)-v(F'))\cdot (v(F)-v(S)+v(F')+2k)\\
&= \frac{1}{6} \cdot \left(1-\frac{1}{k}\right) \cdot (v(F)-v(F'))\cdot (v(F)+v(F')+2k) \\
&~~~ - \frac{1}{6} \cdot \left(1-\frac{1}{k}\right) \cdot v(S) \cdot (2\cdot v(F) + 2k -v(S)).
\end{align*}
Now using that $2\cdot v(F) - v(S) \ge 2(v(F)-v(S))$, we have that
$$\frac{1}{6} \cdot \left(1-\frac{1}{k}\right) \cdot v(S) \cdot (2\cdot v(F) + 2k -v(S)) \ge \frac{2\cdot v(S)}{6} \cdot \left( \left(1-\frac{1}{k}\right) \cdot \Big(v(F)-v(S)\Big) + (k-1) \right)$$
Using that $v(S) \ge 3$, we find that
$$\frac{1}{6} \cdot \left(1-\frac{1}{k}\right) \cdot v(S) \cdot (2\cdot v(F) + 2k -v(S)) \ge \left(1-\frac{1}{k}\right) \cdot (v(F)-v(S)) + (k-1).$$
Altogether then, we find that
$$e(F) - e(F') \le \frac{1}{6} \cdot \left(1-\frac{1}{k}\right) \cdot (v(F)-v(F'))\cdot (v(F)+v(F')+2k),$$
as desired.
\end{proof}

We need an upper bound on the number of edges in a $(k+2,k)$-free $3$-uniform hypergraph $F$ that is also $(\ell+1,\ell)$-free for all $\ell \in [2,k-1]$. We note that any upper bound on $e(F)$ would suffice for our purposes; for example, $\frac{k}{6}\cdot v(F)^2$ follows easily since $F$ is $(k+2,k)$-free; indeed even the upper bound of $v(F)^3$ which follows from $F$ being $3$-uniform would suffice for our purposes. Nevertheless, finding a better upper bound is also of interest in light of Theorem~\ref{thm:supequality}. Thus we found an improved upper bound which we now prove, but first a definition.

The \emph{$t$-shadow} of an $r$-uniform hypergraph $F$ is the $t$-uniform hypergraph $H$ with $V(H)=V(F)$ and $E(H) = \{ S \in \binom{V(H)}{t}: \exists~e \in E(F) \text{ with } S\subseteq e\}$. If $Y$ is a subset of $E(H)$, then we define the \emph{$t$-shadow of $Y$ in $F$} as the $t$-shadow of the subgraph $F'$ that is the union of the edges in $Y$ (so $V(F')=\bigcup_{e\in Y} V(e)$ and $E(F')=Y$).

Here is the upper bound. Note then that Theorem~\ref{thm:limupperbound} is an immediate corollary of Theorem~\ref{thm:limequality} and the following lemma.

\begin{lem}\label{lem:upperbound}
Let $k\ge 2$ be an integer. Let $F$ be a $(k+2,k)$-free $3$-uniform hypergraph that is also $(\ell+1,\ell)$-free for all $\ell \in [2,k-1]$. Then 
$$e(F) \le \frac{k-1}{2k-1} \cdot \binom{v(F)}{2} \le \frac{k-1}{4k-2} \cdot v(F)^2.$$
\end{lem}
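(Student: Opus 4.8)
The plan is to bound $e(F)$ by a double-counting argument on the $2$-shadow of $F$. Let $H$ denote the $2$-shadow of $F$ and set $n = v(F)$. Since $F$ is $(4,2)$-free is \emph{not} assumed here (we only know $(\ell+1,\ell)$-freeness for $\ell \in [2,k-1]$ and $(k+2,k)$-freeness), I cannot immediately conclude that $F$ is linear. However, I would first observe that $F$ being $(3,2)$-free is also not assumed — so two edges can share up to two vertices. The key structural fact to extract is: \textbf{for each pair $T \in \binom{V(F)}{2}$, the number of edges $e \in E(F)$ containing $T$ is small}, and more importantly, the edges through $T$ interact rigidly with the rest of $F$. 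Actually the cleaner route is to count incidences between edges of $F$ and pairs in its $2$-shadow: each edge contributes $3$ pairs, so $3e(F) = \sum_{T} d(T)$ where $d(T)$ is the number of edges of $F$ containing $T$. The goal $e(F) \le \frac{k-1}{2k-1}\binom{n}{2}$ rewrites as $3e(F) \le \frac{3(k-1)}{2k-1}\binom{n}{2}$, i.e. the average of $d(T)$ over all $\binom{n}{2}$ pairs is at most $\frac{3(k-1)}{2k-1}$.

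The heart of the argument should be a local claim about the $2$-shadow structure. I would partition $\binom{V(F)}{2}$ into pairs covered by $F$ (those in $E(H)$) and uncovered pairs, and among covered pairs, further distinguish those lying inside some edge "doubly" versus the generic behavior. The natural approach: build an auxiliary graph or "link" structure and show it is sparse using the forbidden-configuration hypotheses, much as in the proof of Lemma~\ref{lem:maximal}, part (4), where a tree/cycle in a link graph of $k-\ell$ edges yields a forbidden $(k+2,k)$- or $(\ell+j+1,\ell+j)$-configuration. Concretely, I expect one shows: if one greedily grows a connected sub-configuration by adding edges that each introduce only one or two new vertices, the $(\ell+1,\ell)$-freeness for all $\ell \le k-1$ forces such a growing "tight component" to stop after at most $k-1$ edges, and by that point it has spanned enough \emph{new} pairs (roughly $2k-1$ pairs per $k-1$ edges, giving the ratio $\frac{k-1}{2k-1}$) that the shadow must be spread out. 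Thus I would decompose $E(F)$ into maximal such tight components, bound edges per component by $k-1$, and bound the $2$-shadow contribution per component from below by $2k-1$ pairs (each new vertex after the first two contributes at least two new pairs inside the component's vertex set, and distinct components have essentially disjoint shadow contributions up to a lower-order correction).

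More carefully, I would define: two edges $e, f \in E(F)$ are \emph{linked} if $|e \cap f| \ge 2$, or more generally build the equivalence-closure under the relation "share at least $2$ vertices"; but this may not directly give the count, so an alternative is to process edges in an order and assign to each edge a "charge" against new pairs it creates in the shadow. The second inequality $\frac{k-1}{2k-1}\binom{n}{2} \le \frac{k-1}{4k-2}n^2$ is immediate from $\binom{n}{2} \le \frac{n^2}{2}$, so that requires no work. The main obstacle I anticipate is getting the ratio exactly $\frac{k-1}{2k-1}$ rather than something weaker: one must show that a maximal tight component with $c$ edges on $v$ vertices satisfies $c \le \frac{k-1}{2k-1}\binom{v}{2}$-ish locally, or more plausibly that across the whole hypergraph the amortized count works, and the tight case ($\ell$ close to $k-1$, components of size exactly $k-1$ spanning exactly $k$ vertices — a $(k, k-1)$-configuration, i.e. a "tight tree" of $k-1$ triples on $k$ vertices) is what forces the constant. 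Verifying that such a component genuinely contributes $\binom{k}{2} = \frac{k(k-1)}{2}$ pairs to the shadow against only $k-1$ edges gives ratio $\frac{k-1}{\binom{k}{2}} = \frac{2}{k}$, which is \emph{smaller} than $\frac{k-1}{2k-1}$ for $k \ge 2$... so the worst case must instead be components that span few vertices per edge, namely long "loose paths" where each new edge adds $2$ new vertices and hence $2 \cdot \tfrac{?}{}$ — here a loose path of $k-1$ edges has $2(k-1)+1 = 2k-1$ vertices and $\binom{2k-1}{2}$ pairs, giving a tiny ratio, so actually the binding constraint comes from edges that \emph{cannot} be extended, namely isolated tight bundles. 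I would therefore need to identify precisely which configuration is extremal and check the arithmetic $3e(F) \le \frac{3(k-1)}{2k-1}\binom{n}{2}$ holds with the claimed constant; reconciling this is the step most likely to need care, and failing a clean combinatorial decomposition I would fall back on an LP-style weighting of pair-types or on the entropy/Kruskal--Katona-flavored bound for the shadow of a union of bounded tight components.
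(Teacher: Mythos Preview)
Your overall strategy matches the paper's: decompose $E(F)$ into equivalence classes under the relation ``share at least two vertices'', bound the size of each class, and compare edges to pairs in the $2$-shadow. You explicitly define this linked relation and then set it aside (``this may not directly give the count''); in fact it gives the count exactly, and abandoning it is where you go off the rails.

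The concrete gap is the shadow arithmetic for a single component. Let $T$ be a connected component of the auxiliary graph $H$ (vertices $=E(F)$, adjacency $=$ sharing at least two vertices), and write $c=v(T)$ for the number of $F$-edges it contains. The two facts you are missing are:
\begin{itemize}
\item[(i)] $V(T)$ spans \emph{exactly} $c+2$ vertices of $F$. You assert the extremal case is ``$k-1$ triples on $k$ vertices'', but that is a $(k,k-1)$-configuration and is forbidden by the $(\ell+1,\ell)$-freeness hypothesis with $\ell=k-1$. In general $c$ linked edges span at most $c+2$ vertices, and if they spanned $\le c+1$ you would get a forbidden $(c+1,c)$-configuration (here $c\le k-1$ because a connected subgraph on $k$ vertices of $H$ would be a $(k+2,k)$-configuration).
\item[(ii)] The $2$-shadow of $V(T)$ has \emph{exactly} $2c+1$ pairs. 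This follows by induction: peeling off a leaf edge $e$ of a spanning tree of $T$ removes one vertex $v$ from the span (by (i)), and $e=vv_1v_2$ contributes two new shadow pairs $vv_1,vv_2$ while $v_1v_2$ was already present.
\end{itemize}
Moreover, distinct components of $H$ have edge-disjoint $2$-shadows, since two $F$-edges in different components share at most one vertex. Hence
\[
\binom{v(F)}{2}\;\ge\;\sum_i (2c_i+1)\;\ge\;\sum_i \frac{2k-1}{k-1}\,c_i\;=\;\frac{2k-1}{k-1}\,e(F),
\]
using that $c/(2c+1)$ is increasing in $c$ and $c_i\le k-1$. The extremal component is thus $k-1$ edges on $k+1$ vertices with $2k-1$ shadow pairs, which is where the constant $\tfrac{k-1}{2k-1}$ comes from. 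Your ``loose path'' worry is a red herring: such edges share only one vertex, so they lie in different components of $H$, each a singleton with ratio $1/3\le \tfrac{k-1}{2k-1}$. No LP or Kruskal--Katona is needed.
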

\begin{proof}
Let $H$ be the graph with $V(H)=E(F)$ and 
$$E(H) = \left\{ \{S,T\} \in \binom{V(H)}{2}: |S\cap T| \ge 2\right\}.$$ 
\begin{claim}\label{claim:size}
Let $k\ge 2$ be an integer.  If $T$ is a connected subgraph of $H$, then $v(T)\le k-1$ and $V(T)$ (as a set of edges of $F$) spans exactly $v(T) + 2$ vertices of $F$. 
\end{claim}
\begin{proof}
Let $U$ be any connected subgraph of $H$. Note then that it follows from the definition of $H$ that $V(U)$ (as a set of edges of $F$) spans at most $v(U) + 2$ vertices of $F$ and hence $V(U)$ is a $(v(U)+2,v(U))$-configuration of $F$.

Now first suppose for a contradiction that $v(T)\ge k$. Then there exists a connected subgraph $T'$ of $T$ such that $v(T')=k$. But then from above, we find that $V(T')$ is a $(k+2,k)$-configuration of $F$, contradicting that $F$ is $(k+2,k)$-free. Thus we find that $v(T)\le k-1$. 

Furthermore, if $V(T)$ spans at most $v(T)+1$ vertices of $F$, then $V(T)$ is a $(v(T)+1,v(T))$-configuration of $F$, contradicting that $F$ is $(\ell+1,\ell)$-free for all $\ell \in [1,k-1]$ (where the fact that $F$ is $(2,1)$-free follows immediately from the fact that $F$ is a $3$-uniform hypergraph). Hence from above, we find that $V(T)$ spans exactly $v(T)+2$ vertices of $F$ as desired.
\end{proof}

\begin{claim}\label{claim:shadow}
If $T$ is a connected subgraph of $H$, then the $2$-shadow of $V(T)$ in $F$ has exactly $2\cdot v(T)+1$ edges.
\end{claim}
\begin{proof}
We proceed by induction on $v(T)$. Let $a$ be the number of edges in the $2$-shadow of $V(T)$ in $F$. If $v(T)=1$, then $a=3$ as desired. 

So we may assume that $v(T)\ge 2$. Let $X_T$ be the set of vertices of $F$ that $V(T)$ spans. By Claim~\ref{claim:size}, $|X_T| = v(T) + 2$. Since $T$ is connected, there exists a connected subgraph $T'$ of $T$ such that $v(T')=v(T)-1$.  Let $V(T)\setminus V(T') = \{e\}$.  Let $X_{T'}$ be the set of vertices of $F$ that $V(T')$ spans. By Claim~\ref{claim:size}, $|X_{T'}| = v(T') + 2$. 

Note that $X_{T'} \subseteq X_T$. Let $X_T\setminus X_{T'} = \{v\}$. It follows that $v\in e$ and yet $v\not\in f$ for all $f\in V(T')$. Let $e=vv_1v_2$. Hence $vv_1,vv_2$ are in the $2$-shadow of $V(T)$ in $F$ but are not in the $2$-shadow of $V(T')$ in $F$. By definition of $H$, it then follows that $v_1v_2$ is in the $2$-shadow of $V(T')$.

By induction, the number of edges in the $2$-shadow of $V(T')$ is exactly $2\cdot v(T')+ 1$. Thus the number of edges in the $2$-shadow of $V(T)$ is exactly $(2\cdot v(T')+ 1)+2 = 2\cdot v(T) + 1$, as desired. 
\end{proof}

Let $T_1,\ldots, T_r$ be the components of $H$. For each $i\in [r]$, let $F_i$ be the $2$-shadow of $V(T_i)$ in $F$. By Claim~\ref{claim:shadow}, we have for each $i\in [r]$ that $e(F_i)=2\cdot v(T_i)+1$. By the definition of $H$, it follows that $E(F_i)\cap E(F_j) = \emptyset$ for each $i\ne j \in [r]$. By Claim~\ref{claim:size}, $v(T_i)\le k-1$ for all $i\in [r]$.  It follows that for each $i\in [r]$, we have that
$$v(T_i) = \frac{v(T_i)}{2\cdot v(T_i)+1} \cdot e(F_i) \le \frac{k-1}{2k-1} \cdot e(F_i).$$
Since the $F_i$ are edge-disjoint, we find that
$$\binom{v(F)}{2} \ge \sum_{i=1}^r e(F_i) \ge \frac{2k-1}{k-1} \sum_{i=1} v(T_i) = \frac{2k-1}{k-1}\cdot e(F).$$
Rearranging, we find that
$$e(F) \le \frac{k-1}{2k-1} \cdot \binom{v(F)}{2} \le \frac{k-1}{4k-2} \cdot v(F)^2,$$
as desired.
\end{proof}

We are now prepared to state and prove Lemma~\ref{lem:largefreesubgraph} about finding a large $(\ell+1,\ell)$-free subgraph for all $\ell \in [2, \ell-1]$ that is as dense as the original $(k+2,k)$-free $3$-uniform hypergraph.

\begin{lem}\label{lem:largefreesubgraph} Let $k\ge 2$ be an integer. Let $F$ be a $(k+2,k)$-free $3$-uniform hypergraph with $v(F) \ge 8k^2$ and $e(F) \ge \frac{1}{6}\left(1-\frac{1}{2k}\right) \cdot v(F)^2$. Then there exists a $(k+2,k)$-free subgraph $F'$ of $F$ that is also $(\ell+1,\ell)$-free for all $\ell \in [2,k-1]$ such that both of the following hold:
\begin{enumerate}
    \item $v(F') \ge  \frac{v(F)}{\sqrt{4k}}$, and
    \item $\frac{e(F')}{v(F')^2} \ge \frac{e(F)}{v(F)^2}$.
\end{enumerate}
\end{lem}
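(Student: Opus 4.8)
The plan is to apply Lemma~\ref{lem:diffbound} to obtain the subgraph $F'$, and then verify that the error term in that lemma is small enough (using the hypothesis $e(F) \ge \frac{1}{6}(1-\frac{1}{2k})v(F)^2$ together with the upper bound from Lemma~\ref{lem:upperbound}) to force $v(F')$ to be large and to force the density not to drop. First I would set $n = v(F)$ and $n' = v(F')$, where $F'$ is the $(k+2,k)$-free subgraph of $F$ that is also $(\ell+1,\ell)$-free for all $\ell\in[2,k-1]$ guaranteed by Lemma~\ref{lem:diffbound}, so that
$$e(F) - e(F') \le \frac{1}{6}\left(1-\frac{1}{k}\right)(n-n')(n+n'+2k).$$
The point is that if $n'$ were small, the left side would have to be large (since $e(F)$ is large and $e(F')$ is small by Lemma~\ref{lem:upperbound}), but the right side is bounded by roughly $\frac{1}{6}(n^2 - n'^2) + O(kn)$, which is a contradiction for $n' < n/\sqrt{4k}$.

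For step~(1): suppose toward a contradiction that $n' < \frac{n}{\sqrt{4k}}$. By Lemma~\ref{lem:upperbound}, $e(F') \le \frac{k-1}{4k-2}\,n'^2 \le \frac{1}{4}\cdot\frac{n^2}{4k} = \frac{n^2}{16k}$. Combined with the hypothesis $e(F) \ge \frac{1}{6}(1-\frac{1}{2k})n^2$, this gives $e(F) - e(F') \ge \frac{1}{6}(1-\frac{1}{2k})n^2 - \frac{n^2}{16k} = \frac{1}{6}n^2 - (\frac{1}{12k} + \frac{1}{16k})n^2 = \frac{1}{6}n^2 - \frac{7}{48k}n^2$. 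On the other hand, from Lemma~\ref{lem:diffbound} and $n-n' \le n$, $n+n'+2k \le n + \frac{n}{\sqrt{4k}} + 2k$, we bound $e(F)-e(F') \le \frac{1}{6}(1-\frac{1}{k})(n-n')(n+n'+2k)$; expanding, the dominant term is $\frac{1}{6}(n^2 - n'^2) \le \frac{1}{6}n^2$ and the correction terms ($-\frac{1}{6k}n^2$ from the $(1-\frac{1}{k})$ factor, plus $O(kn)$ lower-order terms) must be shown to outweigh $\frac{7}{48k}n^2$. Since $\frac{1}{6k} > \frac{7}{48k}$ comfortably (as $\frac{1}{6} = \frac{8}{48}$), the $n^2$-order terms already give the contradiction, with the $O(kn)$ terms absorbed using $n = v(F) \ge 8k^2$. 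For step~(2): once $n' \ge \frac{n}{\sqrt{4k}}$ is established (so in particular $n-n' \ge 0$ and all quantities behave), I would show $\frac{e(F')}{n'^2} \ge \frac{e(F)}{n^2}$ is equivalent to $e(F)n'^2 - e(F')n^2 \le 0$, i.e. $e(F')n^2 \ge e(F)n'^2$; rewriting $e(F')n^2 = e(F)n^2 - (e(F)-e(F'))n^2$ and using the Lemma~\ref{lem:diffbound} bound on $e(F)-e(F')$, it suffices that $(e(F)-e(F'))n^2 \le e(F)(n^2 - n'^2) = e(F)(n-n')(n+n')$, and since $e(F)\ge \frac{1}{6}(1-\frac{1}{2k})n^2 \ge \frac{1}{6}(1-\frac{1}{k})n^2$ while $\frac{1}{6}(1-\frac{1}{k})(n-n')(n+n'+2k) \le \frac{1}{6}(1-\frac{1}{k})(n-n')(n+n') + \frac{1}{6}(1-\frac{1}{k})(n-n')\cdot 2k$, the desired inequality reduces to checking that the leftover $\frac{1}{3}k(n-n')$ term is dominated, which again follows from $n \ge 8k^2$ together with the gap between the coefficients $\frac{1}{6}(1-\frac{1}{2k})$ and $\frac{1}{6}(1-\frac{1}{k})$ multiplying $(n-n')(n+n')$ — that gap is $\frac{1}{12k}(n-n')(n+n') \ge \frac{1}{12k}(n-n')\cdot n$, which beats $\frac{1}{3}k(n-n')$ precisely when $n \ge 4k^2$.

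The main obstacle I expect is bookkeeping the lower-order terms in $k$ and $n$ carefully: both parts hinge on comparing an $n^2$-order main term against $O(kn)$ and $O(k^2)$ corrections, and the constant slack is deliberately thin (the hypothesis is tuned to $\frac{1}{6}(1-\frac{1}{2k})$, exactly halfway between the trivial density and the threshold where the argument breaks). The hypotheses $v(F)\ge 8k^2$ and the specific constant $\frac{1}{6}(1-\frac{1}{2k})$ are there precisely to make these comparisons go through, so the proof is essentially a disciplined computation rather than a conceptual leap; I would organize it by first isolating the two inequalities to be proven, reducing each to a polynomial inequality in $n$ and $k$, and then verifying each using $n \ge 8k^2$.
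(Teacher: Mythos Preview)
Your overall strategy matches the paper's exactly: apply Lemma~\ref{lem:diffbound} to produce $F'$, feed in the upper bound $e(F')\le \frac14 v(F')^2$ from Lemma~\ref{lem:upperbound}, and compare against the density hypothesis. Part~(2) of your argument is essentially identical to the paper's (the paper phrases it as a contradiction, but the inequality you isolate, $\frac{1}{12k}(n-n')(n+n') \ge \frac{k-1}{3}(n-n')$, is the same one, and your observation that $n\ge 4k^2$ suffices is exactly right).

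There is, however, a genuine arithmetic slip in your treatment of part~(1). By substituting the contradiction hypothesis $n' < n/\sqrt{4k}$ into the bound $e(F')\le \frac14 n'^2$ immediately, you obtain a lower bound $e(F)-e(F') \ge \frac16 n^2 - \frac{7}{48k}n^2$; comparing with the upper bound $\frac16(1-\frac1k)(n^2-n'^2)+\frac{k-1}{3}(n-n') \le \frac16 n^2 - \frac{1}{6k}n^2 + \frac{k-1}{3}n$ gives only $\frac{1}{48k}n^2 \le \frac{k-1}{3}n$, i.e.\ $n \le 16k(k-1)$. This does \emph{not} contradict $n\ge 8k^2$. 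The loss comes from discarding the $n'^2$ term too early: if instead you keep $e(F')\le \frac14 n'^2$ symbolic and combine it with the $-\frac16(1-\frac1k)n'^2$ term coming from the expansion of the Lemma~\ref{lem:diffbound} bound, the net coefficient on $n'^2$ is $\frac14 - \frac16(1-\frac1k) = \frac{1}{12}+\frac{1}{6k}\le \frac16$, and you obtain directly $\frac{1}{12k}n^2 \le \frac16 n'^2 + \frac{k-1}{3}(n-n')$. From here $n^2 \le 2k\,n'^2 + 4k^2 n \le 2k\,n'^2 + \frac12 n^2$ (using $n\ge 8k^2$), which gives $n'^2 \ge n^2/(4k)$ without any contradiction setup. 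This is precisely how the paper argues (1), and your approach is salvaged by this one-line reorganization.
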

\begin{proof}
By Lemma~\ref{lem:diffbound}, there exists a $(k+2,k)$-free subgraph $F'$ of $F$ that is also $(\ell+1,\ell)$-free for all $\ell \in [2,k-1]$ such that
$$e(F) - e(F') \le \frac{1}{6} \cdot \left(1-\frac{1}{k}\right) \cdot (v(F)-v(F'))\cdot (v(F)+v(F')+2k).$$ 
By Lemma~\ref{lem:upperbound}, we have that
$$e(F') \le \frac{k-1}{4k-2} \cdot v(F')^2 \le \frac{1}{4} \cdot v(F')^2.$$
Combining, we find that
$$e(F) \le  \frac{1}{4} \cdot v(F')^2 + \frac{1}{6} \cdot \left(1-\frac{1}{k}\right) \cdot (v(F)-v(F'))\cdot (v(F)+v(F')+2k).$$
Since $e(F) \ge \frac{1}{6}\left(1-\frac{1}{2k}\right) \cdot v(F)^2$, we have that as $k \geq 2$,
$$\frac{1}{12k} \cdot v(F)^2 \le \frac{1}{6} \cdot v(F')^2 + \frac{k-1}{3} \cdot (v(F)-v(F')).$$ 
Multiplying by $12k$ and upper bounding $(k-1)(v(F)-v(F')$ by $k\cdot v(F)$ yields
$$v(F)^2 \le 2k\cdot v(F')^2 + 4k^2 \cdot v(F) \le 2k\cdot v(F')^2 + \frac{1}{2} \cdot v(F)^2,$$
where for the last inequality we used that $v(F) \ge 8k^2$. Rearranging gives
$$\frac{1}{2} v(F)^2 \le 2k\cdot  v(F')^2,$$
Taking square roots, it now follows that
$$\frac{v(F)}{\sqrt{4k}} \le v(F').$$
This proves (1).

Now we prove (2). Let $a = \frac{e(F)}{v(F)^2}$. Suppose towards a contradiction that $\frac{e(F')}{v(F')^2} < a$. Note this implies that $v(F') < v(F)$. Moreover, it implies that
$$e(F)-e(F') \ge a \cdot \left(v(F)^2-v(F')^2\right) = a \cdot (v(F)-v(F'))\cdot (v(F)+v(F')).$$
But then using that $v(F)-v(F') > 0$, it follows that
$$a\cdot (v(F)+v(F')) \le \frac{e(F)-e(F')}{v(F)-v(F')} \le \frac{1}{6} \cdot \left(1-\frac{1}{k}\right) \cdot  (v(F)+v(F')+2k).$$
Using that $a\ge \frac{1}{6} \left(1-\frac{1}{2k}\right)$, we find that
$$\frac{1}{12k} \cdot (v(F)+v(F')) \le \frac{k-1}{3}.$$
But this implies that
$$v(F) \le 4k^2,$$
contradicting that $v(F) \ge 8k^2$. This proves (2).
\end{proof}

We now prove Theorem~\ref{thm:supequality} which stated that the $\limsup n^{-2}f^{(3)}(n;k+2,k)$ and\\ $\limsup n^{-2}g^{(3)}(n;k+2,k)$ were equal.

\begin{proof}[Proof of Theorem~\ref{thm:supequality}]
Suppose not. Let $a := \limsup_{n\to \infty} n^{-2}f^{(3)}(n;k+2,k)$ and \\ $b := \limsup_{n\to \infty} n^{-2}g^{(3)}(n;k+2,k)$. Thus, we have that $a\ne b$. Let $\varepsilon := a - b$. Since $f^{(3)}(n;k+2,k) \ge g^{(3)}(n;k+2,k)$ for all positive integers $n$, we find that $a \ge b$. Since $a\ne b$, we have that $\varepsilon > 0$.  

Let $c := \max\left\{a- \frac{\varepsilon}{2}, \frac{1}{6}\cdot\left(1-\frac{1}{2k}\right)\right\}$. Since $a\ge \frac{1}{6}$ by Theorem~\ref{thm:sixthlowerbound}, we find that $c < a$. Yet by definition, $c \ge a - \frac{\varepsilon}{2} > b$. Let 
$$I_f := \{ n \in \mathbb{N}: n^{-2}f^{(3)}(n;k+2,k) \ge c \}.$$ 
Let
$$I_g := \{ n \in \mathbb{N}: n^{-2}g^{(3)}(n;k+2,k) \ge c \}.$$ 
Since $c < a$, $|I_f|$ is infinite. Since $c > b$, $|I_g|$ is finite. Let 
$n_0 := \max \{n: n\in I_g\}$. Let 
$$n_0^{\prime} := \max \left\{\sqrt{4k}\cdot n_0 + 1, 8k^2 \right\}.$$ 
Since $|I_f|$ is infinite, there exists $n_1 \in I_f$ such that $n_1\ge n_0^{\prime}$. That is, there exists a $(k+2,k)$-free $3$-uniform hypergraph $F$ on $n_1$ vertices such that $e(F) \ge c \cdot v(F)^2$. 

Note that $v(F) \ge 8k^2$ by definition of $n_0^{\prime}$. Since $c \ge \frac{1}{6}\cdot\left(1-\frac{1}{2k}\right)$, we have that $e(F) \ge \frac{1}{6}\cdot\left(1-\frac{1}{2k}\right) \cdot v(F)^2$. Hence by Lemma~\ref{lem:largefreesubgraph}, there exists a $(k+2,k)$-free subgraph $F'$ of $F$ that is also $(\ell+1,\ell)$-free for all $\ell \in [2,k-1]$ such that both of the following hold:
\begin{enumerate}
    \item $v(F') \ge  \frac{v(F)}{\sqrt{4k}}$, and
    \item $\frac{e(F')}{v(F')^2} \ge \frac{e(F)}{v(F)^2}$.
\end{enumerate}
By (1) and the definition of $n_0^{\prime}$, we find that $v(F') > n_0$. By (2), we find that $e(F') \ge c \cdot v(F')^2$. Hence $v(F') \in I_g$, contradicting that $n_0$ is the maximum value in $I_g$. 
\end{proof}

\section{Concluding Remarks}

First we note that our proof of Theorem~\ref{thm:supequality} and hence of Theorem~\ref{thm:limequality} works for multi-hypergraphs, that is hypergraphs where the edges form a multiset instead of a set. The multi-hypergraph case is in fact harder as one needs to forbid $(3,2)$-configurations (which do not exist if the edges form a set). Indeed, this added complication is why we took care to ensure the components of $H$ in Lemma~\ref{lem:maximal} had bounded size; the extra factor of $\left(1-\frac{1}{k}\right)$ resulting from the components having bounded size would not have been needed if we assumed $v(S)\ge 4$ in the proof of Lemma~\ref{lem:diffbound} - which we could have done if we did not need to forbid $(3,2)$-configurations. 

Second we note that the proof of Theorem~\ref{thm:supequality} easily generalizes to the case when $t=2$ to yield the following theorem. 

\begin{thm}\label{thm:supequality2}
For all positive integers $k\ge 2$ and $r\ge 3$,
$$\limsup_{n\to \infty} n^{-2}f^{(r)}(n;(r-2)k+2,k) = \limsup_{n\to \infty} n^{-2}g^{(r)}(n;(r-2)k+2,k).$$
\end{thm}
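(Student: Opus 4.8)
The plan is to follow the exact same strategy used to prove Theorem~\ref{thm:supequality}, transplanting each lemma from the $r=3$, $t=2$ world to general $r\ge 3$ with $t=2$. First I would set up the analogue of Lemma~\ref{lem:maximal}: given a $((r-2)k+2,k)$-free $r$-uniform hypergraph $F$ and a $k$-maximal $((r-2)\ell+2,\ell)$-configuration $S$ (defined as before, with no extension to an $((r-2)\ell'+2,\ell')$-configuration for $\ell<\ell'\le k-1$), I would bound $e(F)-e(F\setminus V(S))$. The key observation is that an edge $e$ with $|e\cap V(S)|=i$ contributes $r-i$ vertices outside $S$; the ``efficient'' edges are those meeting $S$ in exactly one vertex, contributing $r-1$ new vertices each. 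The graph $H$ on $V(F)\setminus V(S)$ whose hyperedges are the sets $e\setminus V(S)$ for $e\in E(F)$ with $|e\cap V(S)|=1$ becomes an $(r-1)$-uniform hypergraph; $k$-maximality and $((r-2)k+2,k)$-freeness force $H$ to be ``linear-forest-like'' with components of bounded size (at most roughly $(r-2)(k-\ell)+1$ vertices), by the same configuration-counting argument: a tree-like substructure in $H$ with $j$ edges together with $S$ gives an $(\ell+j)$-edge configuration on $(r-2)\ell+2+j(r-2)=(r-2)(\ell+j)+2$ vertices. This yields a bound of the shape $e(F)-e(F\setminus V(S)) \le \left(\frac{r-1}{r-2} - c_{r,k}\right)(v(F)-v(S)) + O_{r,k}(1)$ for a suitable positive $c_{r,k}$.

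Next I would prove the analogue of Lemma~\ref{lem:diffbound} by induction on $v(F)$, peeling off $k$-maximal configurations one at a time, obtaining a $((r-2)k+2,k)$-free and $((r-2)\ell+2-1,\ell)$-free-for-all-$\ell$ subgraph $F'$ with $e(F)-e(F')$ bounded by a quadratic expression in $v(F),v(F')$ with leading coefficient roughly $\frac{1}{2}\left(\frac{r-1}{r-2}-c_{r,k}\right)$ times $(v(F)-v(F'))(v(F)+v(F'))$ plus lower-order terms; the induction step works verbatim since $v(S)\ge r\ge 3$ so the number of vertices strictly decreases, and the telescoping computation is identical in form. Then I would prove the analogue of Lemma~\ref{lem:upperbound}: a $((r-2)k+2,k)$-free, $((r-2)\ell+1,\ell)$-free-for-all-$\ell$ hypergraph $F$ satisfies $e(F)\le \alpha_{r,k}\binom{v(F)}{2}$ for some $\alpha_{r,k}<\frac{r-2}{r-1}$ bounded away from $\frac{1}{2(r-2)}$, again via the intersection graph $H$ on $E(F)$ (now joining two edges sharing $\ge 2$ vertices is not quite the right relation — I would instead use the relation that the two edges together span at most $r+1$ vertices, or work with the $2$-shadow directly) and counting $2$-shadow edges per component. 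The precise constant is not important; any bound of the form $e(F)\le \beta_{r,k} v(F)^2$ with $\beta_{r,k}$ strictly less than the leading coefficient from Lemma~\ref{lem:diffbound}'s analogue suffices.

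Finally I would assemble these exactly as in Lemma~\ref{lem:largefreesubgraph} and the proof of Theorem~\ref{thm:supequality}: starting from a $((r-2)k+2,k)$-free $F$ with $e(F)\ge \gamma_{r,k} v(F)^2$ for the right threshold constant $\gamma_{r,k}$ (chosen between $\beta_{r,k}$ and the $\limsup$ value, which is at least $\frac{1}{2!\binom{r}{2}}$ by Theorem~\ref{thm:generallowerbound}) and $v(F)$ large, combining the $e(F)-e(F')$ upper bound with the $e(F')$ upper bound yields both $v(F')\ge v(F)/C_{r,k}$ for a constant $C_{r,k}$ and $e(F')/v(F')^2\ge e(F)/v(F)^2$; feeding this into the limsup argument (taking $n_1\in I_f$ large enough that $v(F')>n_0:=\max I_g$) produces the contradiction. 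The main obstacle is getting the constants to line up: specifically, verifying that the leading coefficient of the $e(F)-e(F')$ bound (coming from the ``inefficiency'' factor $c_{r,k}$ in the structural lemma) genuinely exceeds the density upper bound $\beta_{r,k}$ for $F'$, so that the quadratic inequality can be rearranged to lower-bound $v(F')$ proportionally to $v(F)$ — this is where the bounded-component-size refinement in the structural lemma is essential, just as the $\left(1-\frac{1}{k}\right)$ factor was essential in the $r=3$ case, and one must check the relevant inequality holds for all $r\ge 3$ and $k\ge 2$ (possibly needing $v(F)$ larger than some $r,k$-dependent threshold, which is harmless for a $\limsup$ statement).
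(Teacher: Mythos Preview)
Your plan is essentially the paper's own: the authors state that the proof of Theorem~\ref{thm:supequality} ``easily generalizes'' to $t=2$, with the only substantive change being that the graph $H$ in outcome~(4) of Lemma~\ref{lem:maximal} becomes an $(r-1)$-uniform hyperforest, exactly as you describe. Two small corrections are worth making. First, your leading coefficient $\frac{r-1}{r-2}$ in the analogue of Lemma~\ref{lem:maximal} should be $\frac{1}{r-2}$: an $(r-1)$-uniform hyperforest on $m$ vertices has at most $\frac{m-1}{r-2}$ edges, so $|E_1|\le \frac{1}{r-2}(v(F)-v(S))$ (check this against $r=3$, where the paper gets $(1-1/k)(v(F)-v(S))$, not $2(v(F)-v(S))$). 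Second, you assert that the bounded-component-size refinement is \emph{essential} for all $r\ge 3$, but the paper explicitly observes the opposite: the acceptable loss rate in the analogue of Lemma~\ref{lem:diffbound} is $\frac{2}{r-1}$, while the structural lemma already gives $\frac{1}{r-2}$ without any component bound, and $\frac{2}{r-1} > \frac{1}{r-2}$ once $r\ge 4$ --- so the extra $(1-1/k)$ factor is needed only at $r=3$. Using it everywhere does no harm to correctness, but you should not claim it is where the argument lives for general $r$.
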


Combined with Corollary~\ref{cor:GJKKLP2}, this yields the following.

\begin{thm}\label{thm:limitexists2}
For all positive integers $k\ge 2$ and $r\ge 3$, $\lim_{n\to \infty} n^{-2}f^{(r)}(n;(r-2)k+2,k)$ exists and 
$$\lim_{n\to \infty} n^{-2}f^{(r)}(n;(r-2)k+2,k) = \limsup_{n\to \infty} n^{-2}g^{(r)}(n;(r-2)k+2,k).$$
\end{thm}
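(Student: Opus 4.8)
The plan is to obtain Theorem~\ref{thm:limitexists2} from Theorem~\ref{thm:supequality2} and Corollary~\ref{cor:GJKKLP2} exactly as Theorem~\ref{thm:limequality} was obtained from Theorem~\ref{thm:supequality} and Corollary~\ref{cor:GJKKLP2}. Since $f^{(r)}(n;(r-2)k+2,k)\ge g^{(r)}(n;(r-2)k+2,k)$ for every $n$, we have $\liminf_n n^{-2}f^{(r)}(n;(r-2)k+2,k)\ge\liminf_n n^{-2}g^{(r)}(n;(r-2)k+2,k)$; by Corollary~\ref{cor:GJKKLP2} applied with $t=2$ the latter is a genuine limit and so equals $\limsup_n n^{-2}g^{(r)}(n;(r-2)k+2,k)$, which in turn equals $\limsup_n n^{-2}f^{(r)}(n;(r-2)k+2,k)$ by Theorem~\ref{thm:supequality2}. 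Hence the limit infimum and limit supremum of $n^{-2}f^{(r)}(n;(r-2)k+2,k)$ coincide, the limit exists, and it equals $\limsup_n n^{-2}g^{(r)}(n;(r-2)k+2,k)$. So the entire content of Theorem~\ref{thm:limitexists2} is Theorem~\ref{thm:supequality2}, which is what I would actually prove.

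For Theorem~\ref{thm:supequality2} I would transport the chain Lemma~\ref{lem:maximal}$\to$Lemma~\ref{lem:diffbound}$\to$Lemma~\ref{lem:upperbound}$\to$Lemma~\ref{lem:largefreesubgraph}$\to$Theorem~\ref{thm:supequality} from $r=3$ to arbitrary $r\ge 3$ with $t=2$ fixed, replacing ``$(k+2,k)$'' by ``$((r-2)k+2,k)$'', ``$(\ell+1,\ell)$'' by ``$((r-2)\ell+1,\ell)$'', and the density constant $\tfrac16$ by $\tfrac{1}{2\binom{r}{2}}$; the latter is the value from Theorems~\ref{thm:generallowerbound} and~\ref{thm:GJKKLP3} for $t=2$ and supplies the lower bound $\limsup_n n^{-2}f^{(r)}(n;(r-2)k+2,k)\ge\tfrac{1}{2\binom{r}{2}}$ needed in place of Theorem~\ref{thm:sixthlowerbound}. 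Two pieces require only routine adjustments. The upper bound Lemma~\ref{lem:upperbound} generalizes cleanly: with the same auxiliary graph $H$ on $E(F)$ (adjacency when $|S\cap T|\ge 2$), every connected cluster $T$ of $H$ spans exactly $(r-2)v(T)+2$ vertices of $F$ and has a $2$-shadow of exactly $c_r\cdot v(T)+1$ edges where $c_r=\binom{r-2}{2}+2(r-2)$, so since distinct clusters have disjoint $2$-shadows and $v(T)\le k-1$ one gets $e(F)\le\tfrac{k-1}{c_r(k-1)+1}\binom{v(F)}{2}\le\tfrac12\binom{v(F)}{2}$. And given a suitable analogue of Lemma~\ref{lem:maximal}, the induction in Lemma~\ref{lem:diffbound}, the derivation in Lemma~\ref{lem:largefreesubgraph}, and the $\limsup$ argument of Theorem~\ref{thm:supequality} carry over with only arithmetic changes to the auxiliary constants.

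The substance is thus the structural Lemma~\ref{lem:maximal}. Writing $E_j=\{e\in E(F):|e\cap V(S)|=j\}$ for a $k$-maximal $((r-2)\ell+1,\ell)$-configuration $S$, so $e(F)-e(F\setminus V(S))=\sum_{j=1}^r|E_j|$, one checks that $|E_r|=e(F[V(S)])\le k-1$ (else an $((r-2)k+2,k)$-configuration) and that $E_j=\emptyset$ for $2\le j\le r-1$: adding such an $e$ gives $\ell+1$ edges on at most $v(S)+(r-j)\le(r-2)(\ell+1)+1$ vertices, contradicting $k$-maximality when $\ell+1\le k-1$ and $((r-2)k+2,k)$-freeness when $\ell+1=k$; this subsumes item~(2). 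Everything then reduces to bounding $|E_1|$, which for $r=3$ is the forest argument of items~(3)--(4). The new work for $r\ge 4$ is that $e\setminus V(S)$ is an $(r-1)$-set in $V(F)\setminus V(S)$ rather than a pair, and $k$-maximality still controls the $(r-1)$-uniform hypergraph $\mathcal{Q}=\{e\setminus V(S):e\in E_1\}$: it is \emph{linear} (two members meeting in $\ge 2$ vertices would, with $S$, be too tight), it has maximum degree below $k-\ell$ (by a sunflower observation: $k-\ell$ members through a common vertex together with $S$ span exactly $(r-2)k+2$ vertices), and — generalizing the ``no short cycle, bounded components'' part of item~(4) through loose trees and loose cycles — its components are loose hypertrees with at most $k-\ell$ edges; moreover $E_1\ne\emptyset$ forces $v(S)=(r-2)\ell+1$ and $\ell\le k-2$. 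These facts give $|E_1|<\tfrac{1}{r-2}(v(F)-v(S))$, so $e(F)-e(F\setminus V(S))<\tfrac{1}{r-2}(v(F)-v(S))+(k-1)$ whenever the deleted chunk has $v(S)\ge 2r-3$; a short computation then shows that the resulting quadratic loss coefficient in the Lemma~\ref{lem:diffbound} analogue is below $\tfrac{1}{2(r-2)(2r-3)}<\tfrac{1}{2\binom{r}{2}}$ for $r\ge 4$ (for $r=3$ one instead uses the sharper bound $|E_1|\le(1-\tfrac1k)(v(F)-v(S))$ from items~(3)--(4)), and this is exactly what lets the analogue of Lemma~\ref{lem:largefreesubgraph}, and hence Theorem~\ref{thm:supequality2}, close. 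The only real obstacle is carrying out this structural analysis of $\mathcal{Q}$ cleanly; everything else is bookkeeping.
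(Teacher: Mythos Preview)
Your proposal is correct and follows the paper's approach exactly. The paper derives Theorem~\ref{thm:limitexists2} as an immediate corollary of Theorem~\ref{thm:supequality2} and Corollary~\ref{cor:GJKKLP2}, precisely the deduction in your first paragraph; your subsequent sketch of Theorem~\ref{thm:supequality2} (with the $(r-1)$-uniform hyperforest replacing the forest in Lemma~\ref{lem:maximal}, and the observation that the $(1-\tfrac1k)$ factor becomes unnecessary for $r\ge 4$ because $\tfrac{1}{r-2}<\tfrac{2}{r-1}$) matches the paper's own outline in the concluding remarks, though you supply more detail than the paper, which explicitly omits them.
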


This proves the conjecture of Shangguan and Tamo~\cite{ST20} for the case $t=2$. While our proof easily modifies to yield these theorems as mentioned above, we opted to write the proof only for the case $r=3$ since only this case was originally conjectured by Brown, Erd\H{o}s and S\'os and we did not want to obscure the proof with added generality. 

The needed changes to our proof to work for the general case $t=2$ are mostly superficial. The main substantive change is that in outcome 4 of Lemma~\ref{lem:maximal}, we instead obtain an $(r-1)$-uniform hypergraph $H$ that is a hyperforest with bounded component size. Here though the bounded component size is unneeded even when considering multihypergraphs since the acceptable amount of loss in Lemma~\ref{lem:diffbound} is (by taking derivatives) $\frac{2r}{r(r-1)} = \frac{2}{r-1}$ while the upper bound in Lemma~\ref{lem:maximal} becomes $\left(1-\frac{1}{k}\right) \frac{1}{r-2}$. Thus the extra factor of $\left(1-\frac{1}{k}\right)$ is unneeded when $r\ge 4$ since in that case $\frac{2}{r-1} > \frac{1}{r-2}$. We omit the details.

Shangguan~\cite{s22} has also independently generalized our proof to the case $t=2$ (albeit with a slightly longer, slightly more complicated proof). Nevertheless, it seems new ideas are needed to prove the limits exist when $t\ge 3$.

\bibliographystyle{plain}
\bibliography{mdelcourt}

\end{document}